\long\def\comment#1\endcomment{}
\newtheorem{theorem}{Theorem}[section]
\newtheorem*{theorem*}{Theorem}
\newtheorem{proposition}[theorem]{Proposition}
\newtheorem{lemma}[theorem]{Lemma}
\newtheorem*{conjecture}{Conjecture}
\theoremstyle{definition}
\newtheorem{defn}[theorem]{Definition}
\newtheorem*{example}{Example}
\theoremstyle{remark}
\newtheorem*{remark}{Remark}
\newtheorem*{remarks}{Remarks}
\newcommand{\wt}[1]{\widetilde{#1}}
\newcommand{\sier}[1]{{\cal O}_{#1}}
\newcommand{\C}{{\mathbb C}}
\renewcommand{\P}{{\mathbb P}}
\newcommand{\Z}{{\mathbb Z}}
\newcommand{\lra}{\longrightarrow}
\newcommand{\Ob}{\operatorname{Ob}}
\newcommand{\Bl}{\operatorname{Bl}}
\def\cal{\mathcal}
\def\be{{\beta}}
\def\ga{{\gamma}}
\def\al{{\alpha}}
\def\de{{\delta}}
\def\ep{{\varepsilon}}
\def\la{{\lambda}}
\def\si{{\sigma}}
\def\rh{{\rho}}
\newcommand{\cf}[1]{[#1_2,\dots,#1_{e-1}]}
\newcommand{\bcf}[1]{[\boldsymbol{#1}]}
\def\cir{{\circle*{0.23}}}
\def\cio{{\circle{0.23}}}
\def\lijn#1{{\line(1,#1){1}}}
\begin{document}
\title{The versal deformation of cyclic quotient singularities}

\author[Jan Stevens]{Jan Stevens}
\email {\sf stevens@chalmers.se}
\address
     {Matematiska Vetenskaper,
G\"oteborgs universitet,  SE 412 96 G\"oteborg, Sweden.} 
\begin  {abstract}
We describe the versal deformation of two-dimensional
cyclic quotient singularities in terms of equations, 
following Arndt, Brohme and Hamm.
For the reduced components the equations are determined
by certain systems of dots in a triangle. The equations of
the versal deformation itself are governed by a different combinatorial
structure, involving  rooted trees.
\end{abstract}

\maketitle


One of the goals of singularity theory is to understand the
versal deformations of singularities. In general the base space itself is
a highly singular and complicated object. Computations for a whole class
of singularities are only possible in the presence of many symmetries.
A natural class of surface singularities to consider consists of
the affine toric
singularities. These are just the cyclic quotient singularities.
Their infinitesimal deformations were determined by Riemenschneider
\cite{ri}.
Explicit equations for the versal deformation are the result of a series
of PhD-theses. Arndt \cite{ar} gave  a recipe
to find equations of the base space. This was further studied by
Brohme \cite{sb}, who proposed explicit formulas. Their correctness
was finally proved by Hamm \cite{ha}. One of the objectives of this
paper is to describe these equations.

Unfortunately it is difficult to find the structure of the base space
from the equations. What one can do is to study the situation for
low embedding dimension $e$. On the basis of such computations
Arndt \cite{ar} conjectured  
that the number of irreducible
components should not exceed the Catalan number $C_{e-3}=
\frac{1}{e-2} \binom{2(e-3)}{ e-3}$. 
This conjecture was proved in \cite{js} using Koll\'ar and Shepherd-Barron's 
description \cite{ksb} of smoothing components as deformation spaces of 
certain partial resolutions. 
It was observed by Jan Christophersen that the components are related
to special ways of writing the equations of the singularity. In terms
of his continued fractions, representing zero,
these equations are given in \cite[\S 2]{ch},
and in terms of subdivisions of polygons in \cite[Sect.~6]{js}. A more
direct way of operating with the equations was found by Riemenschneider
\cite{br}. We use it, and the combinatorics behind it, 
in this paper to describe  the components. 

\comment
In this paper we describe equations
for the total space over each component, and the combinatorics behind
them. The fact that one  gets all components this way follows
from [loc.~cit.], but it should really be derived from the equations.
The  situation is complicated by the existence of embedded components.
Again on the basis of computations for low embedding dimension we 
give a conjectural description of the radical of the ideal.

Most of the issues can already be seen in the case of embedding
dimension 5 (the first case where the base space is singular), and
the cone over the rational normal curve of degree $4$ in particular.
A thorough understanding of it is important, yet this case is maybe
deceptively simple, and one has to go up in embedding dimension
to see what is going on.

It was observed by Jan Christophersen that the components are related
to special ways of writing the equations of the singularity, 
generalising the situation in embedding dimension 5. In terms
of his continued fractions, representing zero,
 these equations are given in \cite[\S 2]{ch},
and in terms of subdivisions of polygons in \cite[Sect.~6]{js}. A more
direct way of operating with the equations was found by Riemenschneider
\cite{br}. 
\endcomment
From the toric picture one finds immediately
some equations, by looking at the Newton boundary in the lattice
of monomials:
\[
z_{\ep-1}z_{\ep+1}=z_\ep^{a_\ep},\qquad 2\leq \ep\leq e-1\;.
\]
These form the bottom line of a pyramid of equations 
$z_{\de-1}z_{\ep+1}=p_{\de,\ep}$. In computing these higher equations
choices have to be made. We derive $p_{\de,\ep}$ from 
$p_{\de,\ep-1}$ and $p_{\de+1,\ep}$.
As $z_{\de-1}z_{\ep+1}=(z_{\de-1}z_{\ep})(z_{\de}z_{\ep+1}) /
(z_{\de}z_{\ep})$, we have two natural choices for $p_{\de,\ep}$:
\[
\frac { p_{\de,\ep-1} p_{\de+1,\ep} }{p_{\de+1,\ep-1}}
\qquad\text{or}\qquad
\frac { p_{\de,\ep-1} p_{\de+1,\ep} }{z_{\de}z_{\ep}}\;.
\]
We encode the choice by putting a white or black dot at place 
$(\de,\ep)$ in a triangle of dots. Only for certain systems of 
choices we can write down (in an easy way) enough deformations 
to fill a whole component. 
We call the corresponding triangles of dots sparse coloured triangles.
We prove that the number 
of sparse coloured triangles of given size is the Catalan number 
$C_{e-3}$. 

For the computation of the versal deformation one also starts
from the bottom line of the pyramid of equations. 
Due to the presence of deformation parameters, divisions which previously
were possible, now leave a remainder. We describe Arndt's formalism
to deal with these remainders. One introduces new symbols, which in
fact can be considered as new variables on the deformation space. Because they
are independent of the $a_\ep$, one obtains that the 
base spaces of different cyclic quotients with the same embedding dimension
are isomorphic up to multiplication by a smooth factor, provided all
$a_\ep$ are large enough. 
Also here, in writing the equations, some choices have to made.
A particular system of choices was proposed by Brohme.
To be able to handle the
terms in the formulas, one needs a combinatorial 
description of them. It turns out that the number of terms grows
rapidly, faster than the Catalan numbers, and a different
combinatorial structure is needed. Hamm \cite{ha} discovered how
rooted trees can be used.
We will describe the computation of the versal deformation
for embedding dimension 7 and then introduce Hamm's rooted trees,
and give the equations in general in terms of these trees. We also
describe the main steps in the proof that one really obtains the
versal deformation.

Now that the equations are known, it is time to use them. We make a start
here by showing that one recovers Arndt's equations for the 
versal deformation of the cones over rational normal curves (the case that
all $a_\ep=2$). Furthermore, we look at the reduced base space.
We start by looking at an example. 
We then define an ideal, using sparse coloured triangles, which has
the correct reduced components.
We do not
touch upon the embedded components, leaving this for further research.

As one will  see, notation becomes rather heavy, with many levels of
indices. Although \TeX\ allows almost anything, we have
tried to restrict it to a minimum. One has to admire Arndt's thesis
\cite{ar}, written on a typewriter. At that time, \TeX\ was available,
but J\"urgen had already purchased an electronic typewriter for his
Diplomarbeit. He decided to write the indices separately, 
diminish them with a photocopier and to glue them in the manuscript.

This paper is organised as follows. After a section introducing
cyclic quotients and their infinitesimal deformations, we treat 
the case of embedding dimension $5$ in detail. In Section 3 we 
define sparse coloured triangles and show how to describe the 
reduced components with them. In Section 4 we give the equations
for the total space of the versal deformation: we describe Arndt's
results, do the case of embedding dimension 6, and formulate and
sketch the proof of 
the general result in terms of Hamm's rooted trees. 
In the last Section we discuss the reduced base space.


\section{Cyclic quotient singularities}\label{cqs}
Let $G_{n,q}$ be the cyclic  subgroup of $Gl(2,\C)$, generated by
$\left( \begin{smallmatrix}\zeta_n & 0 \\ 0 & \zeta_n^q
\end{smallmatrix} \right)$,
where $\zeta_n$ is a primitive $n$-th root of unity and $q$ is coprime to $n$.
The group acts on $\C^2$ and on the polynomial ring $\C[u,v]$. 
The quotient $\C^2/G_{n,q}$ has a singularity at the origin, which is
called the cyclic quotient singularity $X_{n,q}$.
The quotient map is a map of affine toric varieties, given by the inclusion of
the standard lattice $\Z^2$ in the lattice $N=\Z^2+\Z\cdot\frac1n(1,q)$, with
as cone $\sigma$ the first quadrant. The dual lattice $M$ gives exactly the
invariant monomials: $\C[M\cap \sigma^\vee]=\C[u,v]^{G_{n,q}}$.
Generators of this ring are
\[
z_\ep=u^{i_\ep}v^{j_\ep}, \qquad \ep=1,\,\dots,e\;,
\]
where the numbers $i_\ep$, $j_\ep$ are determined by the continued
fraction expansion $n/(n-q)=\cf a$ in the following way:
\begin{eqnarray*}
i_e=0, &i_{e-1}=1, &i_{\ep+1}+i_{\ep-1}=a_\ep i_\ep\\
j_1=0, &j_2=1, &j_{\ep-1}+j_{\ep+1}=a_\ep j_\ep\;.
\end{eqnarray*}
We also  write $X\bcf a$ for $X_{n,q}$.

We exclude the case of the $A_k$ singularities and assume
that the embedding dimension $e$ is at least 4.
The equations for $X\bcf a$ can be given in quasi-determinantal format
 \cite{r2}:
\[
\left(
\begin{array}{ccccccc}
z_1 &              & z_2 & \dots &     z_{e-2} & &  z_{e-1} \\
\multicolumn{1}{c}{} &  z_2^{a_2-2} & \multicolumn{3}{c}{\dots}    &
z_{e-1}^{a_{e-1}-2} & \multicolumn{1}{c}{} \\
z_2 &              & z_3 & \dots &     z_{e-1}   &   &   z_{e} 
\end{array}
\right)
\]
We recall that the generalised minors of a quasi-determinant
\[\left(
\begin{array}{ccccccc}
f_1 &              & f_2 & \dots &    f_{k-1} & &  f_{k} \\
\multicolumn{1}{c}{} &  h_{1,2} & \multicolumn{3}{c}{\dots}    &
h_{k-1,k} & \multicolumn{1}{c}{} \\
g_1 &              & g_2 & \dots &    g_{k-1}   &    &   g_{k} 
\end{array}\right)
\]
are $f_ig_j - g_i(\prod_{\ep=i}^{j-1} h_{\ep,\ep+1})f_j$.

By perturbing the entries in the quasi-determinantal in the most
general way one obtains the equations for the Artin component, the
deformations which admit simultaneous resolution. We describe these
deformations more concretely, following the notation of
Brohme \cite{sb} (differing slightly from \cite{js} in that
the letters $s$ and $t$ are interchanged).
We first remark that in
general the first and the last extra term in a quasi-determinantal
can be written in the matrix: just take $g_1h_{1,2}$ as entry in the
lower left corner and $f_kh_{k-1,k}$ as entry in the upper right corner.
For a cyclic quotient this gives entries $z_2^{a_2-1}$ and 
$z_{e-1}^{a_{e-1}-1}$. 
We deform
\begin{equation}\label{artin}
\left(
\begin{array}{ccccccccc}
z_1 &   z_2 && z_3 & \dots &   z_{e-3}&&  z_{e-2}  & Z_{e-1}^{(a_{e-1}-1)}  \\
\multicolumn{2}{c}{} &  Z_3^{(a_3-2)} & \multicolumn{3}{c}{\dots}    &
Z_{e-2}^{(a_{e-2}-2)} & \multicolumn{2}{c}{} \\
 Z_2^{(a_2-1)} &z_3+t_3 & & z_4+t_4 & \dots & z_{e-2}+t_{e-2}&& z_{e-1} & z_{e} 
\end{array}
\right)
\end{equation}
where 
\[
Z_\ep^{(a_\ep-2)} =
z_\ep ^{a_\ep -2}+s_\ep^{(1)}z_\ep ^{a_\ep -3}+\dots+
s_\ep^{(a_\ep -2)}\;,
\]
and
\[
Z_\ep^{(a_\ep-1)} =
z_\ep ^{a_\ep -1}+s_\ep^{(1)}z_\ep ^{a_\ep -2}+\dots+
s_\ep^{(a_\ep -1)}\;.
\]
To obtain all infinitesimal deformations we add variables 
$s_\ep^{(a_\ep -1)}$ and write the perturbation 
\[
Z_\ep^{(a_\ep-2)} =
z_\ep ^{a_\ep -2}+s_\ep^{(1)}z_\ep ^{a_\ep -3}+\dots+
s_\ep^{(a_\ep -2)}+s_\ep^{(a_\ep -1)}z_\ep^{-1}\;,
\]
which gives the coordinates
$s_\ep^{(a)}$, $1\leq a\leq a_\ep-1$,
$\ep=2,\dots, e-1$ and $t_\ep$, $\ep=3,\dots, e-2$ on the vector space 
$T_{X\bcf a}^1$.
We note in particular the polynomial equations
\begin{equation}\label{infdef}
 z_{\ep-1}(z_{\ep+1}+t_{\ep+1}) =(z_{\ep}+t_{\ep})(z_\ep^{a_\ep-1}
 + s_\ep^{(1)}z_\ep^{a_\ep-2}+\dots + s_\ep^{(a_\ep-1)})\;.
\end{equation}
To avoid special cases we make this formula valid for all $\ep$
by introducing varaibles $t_2$,  $t_{e-1}$ and $t_e$, which we set to zero. 


\section{Embedding dimension 5}
\label{inbvijf}
The two components of the versal deformation of the cone over the rational
normal curve of degree four are related to the two different ways of
writing the equations. The largest, the Artin component, is obtained by
deforming the $2\times4$ matrix 
\[
\begin{pmatrix}
 z_1& z_2 &z_3&z_4 \\ z_2 &z_3&z_4   &z_5
\end{pmatrix}\;.
\]
The equations can also be written as $2\times2$ minors of the symmetric
$3\times3$ matrix
\[
\begin{pmatrix}
z_1& z_2 &z_3 \\ z_2 &z_3&z_4 \\ z_3&z_4   &z_5
\end{pmatrix}\;,
\]
and perturbing this matrix gives as total space the cone
over the Veronese embedding of $\P^2$. Riemenschneider observed that this
generalises to all cyclic quotients of embedding dimension $5$ \cite{ri}.
One can even give the equations as quasi-determinantals. For the Artin 
component we take as described above
\[\left(
\begin{array}{ccccc}
   z_1& z_2&  &z_3&z_4^{a_4-1} \\
   & & z_3^{a_3-2} & & \\
   z_2^{a_2-1} &z_3& &z_4   &z_5
\end{array}\right)
\]
and for the other component
\[\left(
\begin{array}{ccccc}
 z_1& &z_2 & &z_3^{a_3-1} \\
    & z_2^{a_2-2} & & &\\
 z_2& &z_3& &z_4 \\
   & & & z_4^{a_4-2} &  \\
 z_3^{a_3-1}& &z_4&  &z_5
\end{array}\right)\, .
\]
The meaning of the last symbol becomes clear if we write out the equations, 
which we have to do in order to generalise, 
as  for higher embedding dimension  only
the Artin component has such a nice determinantal description.

We  write a pyramid of equations. From the $2\times4$
quasi-determinantal we get
\[
\begin{matrix}
&z_1z_5=z_2^{a_2-1}z_3^{a_3-2}z_4^{a_4-1} & \\[2mm]
\multicolumn{3}{c}{z_1z_4=z_2^{(a_2-1)}z_3^{a_3-1}  \qquad 
z_2z_5= z_3^{a_3-1}z_4^{a_4-1}}   \\[2mm]
z_1z_3=z_2^{a_2} \;\; 
&z_2z_4=z_3^{a_3}  &
\;\;z_3z_5=z_4^{a_4}
\end{matrix}
\]
and from the symmetric quasi-determinantal
\[
\begin{matrix}
&z_1z_5=z_2^{a_2-2}(z_3^{a_3-1})^2z_4^{a_4-2}  \\[2mm]
\multicolumn{3}{c}{z_1z_4=z_2^{(a_2-1)}z_3^{a_3-1}   \qquad
z_2z_5= z_3^{a_3-1}z_4^{a_4-1}}  \\[2mm]
z_1z_3=z_2^{a_2} & z_2z_4=z_3^{a_3}  &
z_3z_5=z_4^{a_4}
\end{matrix}
\]
The difference between these two systems of equations lies in the
top line. Observe that $
z_1z_5-z_2^{a_2-2}(z_3^{a_3-1})^2z_4^{a_4-2} 
=(z_1z_5-z_2^{a_2-1}z_3^{a_3-2}z_4^{a_4-1}) 
+ z_2^{a_2-2}z_3^{a_3-2}z_4^{a_4-2} (z_2z_4-z_3^{a_3} )
$.

To describe the deformation we introduce the following polynomials:
\[
Z_\ep^{(a_\ep-k_\ep)} =
z_\ep ^{a_\ep -k_\ep}+s_\ep^{(1)}z_\ep ^{a_\ep -k_\ep-1}+\dots+
s_\ep^{(a_\ep -k_\ep)}\;.
\]
By deforming the first set of equations we obtain
the Artin component: 
\[
\begin{matrix}
&z_1z_5=Z_2^{(a_2-1)}Z_3^{(a_3-2)}Z_4^{(a_4-1)}  \\[2mm]
\multicolumn{3}{c}{z_1z_4=Z_2^{(a_2-1)}Z_3^{(a_3-2)}z_3   \qquad
z_2z_5= (z_3+t_3)Z_3^{(a_3-2)}Z_4^{(a_4-1)}}   \\[2mm]
z_1(z_3+t_3)=z_2Z_2^{(a_2-1)} & 
z_2z_4=z_3Z_3^{(a_3-2)}(z_3+t_3)   &
z_3z_5=z_4Z_4^{(a_4-1)}
\end{matrix}
\]
The second set of equations leads to the other component:
\[
\begin{matrix}
&z_1z_5=Z_2^{(a_2-2)}(Z_3^{(a_3-1)})^2Z_4^{(a_4-2)}  \\[2mm]
\multicolumn{3}{c}{z_1z_4=z_2Z_2^{(a_2-2)}Z_3^{(a_3-1)}   \qquad
z_2z_5= Z_3^{(a_3-1)}Z_4^{(a_4-2)}z_4 }  \\[2mm]
z_1z_3=z_2Z_2^{(a_2-2)}z_2  &
z_2z_4=z_3Z_3^{(a_3-1)}  &
z_3z_5=z_4Z_4^{(a_4-2)}z_4
\end{matrix}
\]
Together these two components constitute the versal deformation.
They fit together to the deformation 
\begin{equation}\label{vdvijf}
\begin{matrix}
\multicolumn{3}{c}{ z_1z_5=Z_2^{(a_2-1)}Z_3^{(a_3-2)}Z_4^{(a_4-1)}+
  s_3^{(a_3-1)}Z_2^{(a_2-2)}Z_3^{(a_3-1)}Z_4^{(a_4-2)}}\\[2mm]
\multicolumn{3}{c}{z_1z_4=Z_2^{(a_2-1)}Z_3^{(a_3-1)}  \qquad
z_2z_5= \wt Z_3^{(a_3-1)}Z_4^{(a_4-1)} }  \\[2mm]
z_1(z_3+t_3)=z_2Z_2^{(a_2-1)}  \qquad&
z_2z_4=Z_3^{(a_3-1)}(z_3+t_3)   &
\qquad z_3z_5=z_4Z_4^{(a_4-1)}
\end{matrix}
\end{equation}
over the base space defined by the equations
$$
s_2^{(a_2-1)}s_3^{(a_3-1)} = t_3s_3^{(a_3-1)} =
s_3^{(a_3-1)}s_4^{(a_4-1)} =0\;.
$$
Here the factor $\wt Z_3^{(a_3-1)}$ is defined by the equation
$z_3\wt Z_3^{(a_3-1)}=(z_3+t_3) Z_3^{(a_3-1)}$, which is possible
because of the equation $t_3s_3^{(a_3-2)} =0$.

\begin{remark}
The equations for the versal deformation
restrict (by setting the deformation variables to zero)
to the equations of the singularity
in the preferred form for the Artin component. A choice has to be made, and
this one is sensible as the Artin component is the only component, which
exists for all cyclic quotients. Observe also that the right hand side
of the top equation in \eqref{vdvijf} is no longer a product. 
For the study of the non-Artin component, e.g., to determine adjacencies,
the adapted equations are much better suited.
We have therefore in general two tasks, to describe equations suited
for each reduced component separately, and to give equations for
the total versal deformation.
\end{remark}

\section{Equations for components}
The reduced components of the versal deformation are related
to ways of writing the equations of the singularity, as shown in
\cite{ch} and  \cite{js}. Here we give a description which first appeared
in \cite{br}.

We have to write the equations $z_{\de-1}z_{\ep+1}=p_{\de,\ep}$.
Motivated by the case of embedding dimension 5 we
want the right hand side of the equations  to be of the form
$p_{\de,\ep} = \prod_\be \bigl(z_\be^{a_\be-k_\be}\bigr)^{\al_\be}$. Here the
$k_\be$ and $\al_\be$ depend on $\ep-\de$ , but the formula should be
in some sense universal, it should hold for
all $a_\be$ large enough (for $a_\be-k_\be$ has to be non-negative).
The toric weight vectors $w_\be\in\Z^2$ of the variables
$z_\be$ should therefore satisfy the equations 
\[
w_\de+w_\ep=\sum \al_\be(a_\be-k_\be)w_\be\;,
\]
the same equations as encountered by Jan Christophersen (see the Introduction
of \cite{ch}).

We construct
a pyramid of equations $z_{\de-1}z_{\ep+1}=p_{\de,\ep}$,
where $2\leq\de\leq\ep\leq e-1$. We start from the base line containing
the $z_{\ep-1}z_{\ep+1}=z_\ep^{a_\ep}$, and construct the next lines
inductively. We have to make choices, which we encode in a subset 
$B(\underline\triangle)$ of the
set of pairs $(\de,\ep)$ with $1<\de<\ep< e$.
As $z_{\de-1}z_{\ep+1}=(z_{\de-1}z_{\ep})(z_{\de}z_{\ep+1}) /
(z_{\de}z_{\ep})$, we have two natural choices for $p_{\de,\ep}$:
we take
\[
P_{\de\ep}=
\begin{cases}
\frac { p_{\de,\ep-1} p_{\de+1,\ep} }{p_{\de+1,\ep-1}}\;,
&\text{if } (\de,\ep)\notin B(\underline\triangle), \\[2mm]
\frac { p_{\de,\ep-1} p_{\de+1,\ep} }{z_{\de}z_{\ep}}\;, &
\text{if } (\de,\ep)\in B(\underline\triangle).
\end{cases}
\]
We depict  our set by
a triangle $\underline\triangle$ of the type
\newcommand{\wit}{\begin{picture}(6,6)(3,3) \put(3,3){\circle{4.5}} \end{picture}}
\newcommand{\zwart}{\begin{picture}(6,6)(3,3) \put(3,3){\circle*{5}}
\end{picture}}
\[
\begin{picture}(90,51)(-3,-3)
\put(0,0){\zwart}
\put(30,0){\zwart}
\put(60,0){\zwart}
\put(90,0){\zwart}
\put(7.5,7.5){\line(1,0){75}}
\put(15,15){\zwart}
\put(45,15){\wit}
\put(75,15){\zwart}
\put(30,30){\wit}
\put(60,30){\wit}
\put(45,45){\zwart}
\end{picture}
\]
The dots correspond to  equations  $z_{\de-1}z_{\ep+1}=p_{\de,\ep}$
above the base line in the pyramid of equations.
We colour a dot in $\underline\triangle$ 
black if the corresponding point $(\de,\ep)$ is an element of
$B(\underline\triangle)$.

As the second line of equations  always reads $p_{\ep-1,\ep}=
z_{\ep-1}^{a_{\ep-1}-1}z_\ep^{a_\ep-1}$, $2<\ep\leq e-1$, 
the  lowest line of the triangle
is coloured black. To characterise the coloured triangles, which give
good equations, it suffices to consider only triangles $\triangle$, obtained
by deleting this black line:
\[
\begin{picture}(75,36)(-3,12)
\put(15,15){\zwart}
\put(45,15){\wit}
\put(75,15){\zwart}
\put(30,30){\wit}
\put(60,30){\wit}
\put(45,45){\zwart}
\end{picture}
\]
The original triangle will be referred to as extended triangle.
We introduce some more terminology. A (broken)  line  $l_\ep$, in
both the triangle $\triangle$ and the extended triangle $\underline
\triangle$, is a line connecting all dots which have $\ep$ as one
of the coordinates:
\[
\begin{picture}(150,80)(-30,-15)
\put(0,0){\zwart}
\put(30,0){\zwart}
\put(60,0){\zwart}
\put(90,0){\zwart}
\put(-15,-15){\line(1,1){75}}
\put(-15,-15){\line(-1,1){15}}
\put(15,-15){\line(1,1){60}}
\put(15,-15){\line(-1,1){30}}
\put(45,-15){\line(1,1){45}}
\put(45,-15){\line(-1,1){45}}
\put(75,-15){\line(1,1){30}}
\put(75,-15){\line(-1,1){60}}
\put(105,-15){\line(1,1){15}}
\put(105,-15){\line(-1,1){75}}
\put(7.5,7.5){\line(1,0){75}}
\put(15,15){\zwart}
\put(45,15){\wit}
\put(75,15){\zwart}
\put(30,30){\wit}
\put(60,30){\wit}
\put(45,45){\zwart}
\put(28,60){\makebox(0,0)[r]{$l_6$}}
\put(13,45){\makebox(0,0)[r]{$l_5$}}
\put(-2,30){\makebox(0,0)[r]{$l_4$}}
\put(-17,15){\makebox(0,0)[r]{$l_3$}}
\put(-32,0){\makebox(0,0)[r]{$l_2$}}
\end{picture}
\qquad
\begin{picture}(150,80)(-30,-15)
\put(-15,-15){\line(1,1){75}}
\put(-15,-15){\line(-1,1){15}}
\put(15,-15){\line(1,1){60}}
\put(15,-15){\line(-1,1){30}}
\put(45,-15){\line(1,1){45}}
\put(45,-15){\line(-1,1){45}}
\put(75,-15){\line(1,1){30}}
\put(75,-15){\line(-1,1){60}}
\put(105,-15){\line(1,1){15}}
\put(105,-15){\line(-1,1){75}}
\put(15,15){\zwart}
\put(45,15){\wit}
\put(75,15){\zwart}
\put(30,30){\wit}
\put(60,30){\wit}
\put(45,45){\zwart}
\put(62,60){\makebox(0,0)[l]{$l_2$}}
\put(77,45){\makebox(0,0)[l]{$l_3$}}
\put(92,30){\makebox(0,0)[l]{$l_4$}}
\put(107,15){\makebox(0,0)[l]{$l_5$}}
\put(122,0){\makebox(0,0)[l]{$l_6$}}
\end{picture}
\]
If necessary we specify a triangle by the coordinates of its
vertex $(\de,\ep)$,  as $\triangle_{\de,\ep}$. The {\em height\/} 
of a triangle
$\triangle_{\de,\ep}$ is $\ep-\de-1$. This is the number of 
horizontal lines and also the number of dots on the base line.
A dot $(\al,\be)$ in a triangle $\triangle_{\de,\ep}$ determines
a sub-triangle $\triangle_{\al,\be}$, standing on the
same base line, of height $\be-\al-1$.

The crucial property for getting good equations is given in the following
definition.
\begin{defn} A coloured triangle $\triangle_{\de,\ep}$ is
{\em sparse}, if for it and for every sub-triangle $\triangle_{\al,\be}$
the number of black dots is at most the height of the triangle
with equality if and only if its vertex is black.
\end{defn}
Note that the example triangle above is sparse, whereas the following
triangle is not sparse.
\[
\begin{picture}(75,36)(-3,12)
\put(15,15){\zwart}
\put(45,15){\zwart}
\put(75,15){\wit}
\put(30,30){\wit}
\put(60,30){\wit}
\put(45,45){\wit}
\end{picture}
\]

The relation $(\al,\ga)\preceq (\be,\de)$, if 
$\al\geq\be$ and $\ga\leq\de$ is a partial ordering. 
It means that $(\al,\ga)$ lies 
(as black or white dot) in the triangle $\triangle_{\be,\de}$.

\begin{lemma}\label{sparse}
If two black dots lie in the region on or above a given  line
$l_\ep$, then both of them  lie on $l_\ep$ or they are comparable in the
partial ordering $\preceq$.
\end{lemma}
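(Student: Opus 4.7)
I would argue by contradiction. Suppose $p_1=(\al_1,\be_1)$ and $p_2=(\al_2,\be_2)$ are two incomparable black dots which both lie on or above $l_\ep$ but do not both lie on $l_\ep$. Incomparability in the ordering $\preceq$ means, after relabelling if necessary, that $\al_1<\al_2$ and $\be_1<\be_2$, while the hypothesis of lying on or above $l_\ep$ reads $\al_i\leq\ep\leq\be_i$, so in particular $\al_2\leq\ep\leq\be_1$. First I would dispose of the case $\al_2=\be_1$: this would force $\ep=\al_2=\be_1$, so $p_1$ would lie on $l_\ep$ via $\be_1=\ep$ and $p_2$ via $\al_2=\ep$, contradicting the hypothesis. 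Hence we have the strict chain $\al_1<\al_2<\be_1<\be_2$, and in particular the four corner positions $(\al_1,\be_1),\,(\al_2,\be_1),\,(\al_1,\be_2),\,(\al_2,\be_2)$ are all genuine dots of $\triangle$.

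The core of the argument is a double squeeze using sparseness on the four subtriangles
\[
T=\triangle_{\al_1,\be_2},\quad T_L=\triangle_{\al_1,\be_1},\quad T_R=\triangle_{\al_2,\be_2},\quad T_M=\triangle_{\al_2,\be_1}=T_L\cap T_R,
\]
whose heights satisfy the arithmetic identity $h_T=h_L+h_R-h_M$. Since $p_1$ is the (black) vertex of $T_L$ and $p_2$ is the (black) vertex of $T_R$, the equality clause of sparseness forces $b(T_L)=h_L$ and $b(T_R)=h_R$, where $b(\cdot)$ denotes the number of black dots. Sparseness on $T_M$ and $T$ gives $b(T_M)\leq h_M$ and $b(T)\leq h_T$. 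Inclusion--exclusion combined with the height identity then produces the squeeze
\[
h_T\geq b(T)\geq b(T_L\cup T_R)=h_L+h_R-b(T_M)\geq h_L+h_R-h_M=h_T,
\]
so every inequality is in fact an equality.

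The two resulting equalities $b(T)=h_T$ and $b(T)=b(T_L\cup T_R)$ together supply the contradiction. The first, via the equality clause of sparseness, forces the vertex $(\al_1,\be_2)$ of $T$ to be black; the second forces every black dot of $T$ to lie in $T_L\cup T_R$. But $(\al_1,\be_2)$ lies in neither $T_L$ (its second coordinate $\be_2$ exceeds $\be_1$) nor $T_R$ (its first coordinate $\al_1$ is smaller than $\al_2$), so this forced black vertex has nowhere to go, and we are done. The only real step of substance in the plan is to spot the rectangle of subtriangles together with the accompanying height identity; once those are in place, sparseness tightens both ends of the squeeze simultaneously, which is precisely what delivers the contradiction.
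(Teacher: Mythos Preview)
Your proof is correct and follows essentially the same route as the paper's: set up the four subtriangles $\triangle_{\al_1,\be_2}$, $\triangle_{\al_1,\be_1}$, $\triangle_{\al_2,\be_2}$, $\triangle_{\al_2,\be_1}$, use inclusion--exclusion together with the height identity, and derive a contradiction from the sparsity condition. The paper phrases the endgame slightly more tersely (it observes directly that $\triangle_{\al_1,\be_2}$ acquires at least $\be_2-\al_1-1$ black dots \emph{other than its vertex}, which already violates sparsity), whereas you spell out the squeeze and then locate the forced black vertex outside $T_L\cup T_R$; but the underlying argument is the same.
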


\begin{proof}
Suppose on the contrary that the black dots $(\al,\ga)$ and $(\be,\de)$ 
on or above $l_\ep$ are not comparable in the partial ordering
and that at least one of them lies strictly above $l_\ep$.
We may assume that  $\ga\leq\de$. This implies that $\al<\be$.
The assumption that $(\al,\ga)$ lies on or above $l_\ep$ means that
$\al\leq\ep\leq\ga$ and likewise $\be\leq\ep\leq\de$. Furthermore in
one of these one has strict inequalities. Therefore $\be<\ga$.
The triangle $\triangle_{\al,\ga}$  contains exactly $\ga-\al-1$ black dots, 
the triangle $\triangle_{\be,\de}$  contains exactly $\de-\be-1$ black dots and
their intersection is the triangle $\triangle_{\be,\ga}$, which contains
at most $\ga-\be-1$ dots. So the triangle $\triangle_{\al,\de}$, which has
as vertex  the supremum $(\al,\de)$ of $(\al,\ga)$ and $(\be,\de)$ 
in the partial ordering, 
contains at least $(\ga-\al-1)+(\de-\be-1)-(\ga-\be-1)=\de-\al-1$ 
black dots other than its vertex, contradicting sparsity.
\end{proof}

\begin{theorem}
The number of sparse coloured triangles  of height $e-4$ 
is the Catalan number $C_{e-3}=\frac1{e-2}\binom{2(e-3)}{e-3}$.
\end{theorem}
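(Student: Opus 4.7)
The plan is to prove $f(h) = C_{h+1}$ by induction on $h$, using Lemma~\ref{sparse} to control the distribution of black dots in a sparse triangle. Writing $A(h)$ for the number of sparse coloured triangles of height $h$ with black top vertex, the central technical claim is that $A(h) = f(h-1)$, realised by the restriction map $T = \triangle_{\delta,\epsilon} \mapsto T_L := \triangle_{\delta,\epsilon-1}$.

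Injectivity on black-vertex triangles is immediate; the substance is surjectivity, a \emph{unique extension} principle: any sparse $T_L$ of height $h-1$ extends uniquely to a black-vertex sparse triangle of height $h$. I would prove this by determining the colours $r_i$ of the $h-1$ new right-edge dots $(\delta+i,\epsilon)$, for $1 \le i \le h-1$, inductively in $i$. The sparsity of each nested sub-triangle $\triangle_{\delta+i,\epsilon}$, combined with the count constraint that the whole triangle contains exactly $h$ black dots (forced by the black vertex), pins down each $r_i$ in turn; Lemma~\ref{sparse}'s chain structure for black dots lying strictly above any V-line is the combinatorial tool that ensures these locally forced choices are globally consistent.

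Given $A(h) = f(h-1)$ and the inductive hypothesis $f(h-1) = C_h$, one obtains $A(h) = C_h$. To close the induction on $f(h)$ itself, I would apply the same forcing technique to count white-vertex extensions of a given $T_L$: a sparse $T_L$ with $L$ black dots should admit exactly $h - L$ white-vertex sparse extensions, yielding the recursion $f(h) = (h+1)\, f(h-1) - S(h-1)$, where $S(h) = \sum_T L(T)$ is the total number of black dots over sparse triangles of height $h$. A parallel derivation gives a companion recursion for $S$, and combining these with the Catalan three-term relation completes the induction, proving $f(h) = C_{h+1}$.

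The main obstacle is the unique-extension claim: the sparsity conditions on the overlapping nested sub-triangles $\triangle_{\delta+i,\epsilon}$ interact subtly, and one must rely on Lemma~\ref{sparse}'s requirement that incomparable black dots share a V-line to verify that the locally forced colour assignments at each step are simultaneously realisable.
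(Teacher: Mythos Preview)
Your unique-extension claim $A(h)=f(h-1)$ is correct and is essentially the special case where the highest black dot on the edge happens to be the vertex. The paper proceeds differently: rather than fixing the vertex colour, it records the \emph{position} $(2,\beta)$ of the highest black dot on the edge $l_2$. For each $\beta$ the triangle then splits into two independent sparse sub-triangles $\triangle_{3,\beta}$ and $\triangle_{\beta,e-1}$, with the remaining edge dots forced by exactly the mechanism you describe. Summing over $\beta$ gives Segner's convolution $C_{n+1}=\sum_i C_i C_{n-i}$ in one stroke.

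Your route has a genuine gap at the step ``a parallel derivation gives a companion recursion for $S$''. Granting the extension count (indeed $h-L$ white-vertex extensions and one black-vertex extension, hence $h+1-L$ in total), you correctly obtain
\[
f(h)=(h+1)\,f(h-1)-S(h-1).
\]
But the analogous computation for $S(h)$ does \emph{not} close in the pair $(f,S)$: the extensions of a fixed $T_L$ have $L(T)$ running over $\{L,L+1,\dots,h\}$, so
\[
S(h)=\sum_{T_L}\sum_{k=L(T_L)}^{h}k
     =\tfrac{h(h+1)}{2}\,f(h-1)+\tfrac12\,S(h-1)-\tfrac12\sum_{T_L}L(T_L)^2,
\]
which drags in the second moment, and each further moment brings in the next. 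There is no two-variable recursion for $(f,S)$ alone here, so the induction as you have sketched it cannot be completed. One can salvage the idea by tracking the whole distribution $g(h,m)=\#\{T:L(T)=m\}$, which satisfies $g(h,m)=\sum_{j\le m}g(h-1,j)$ and is therefore the Catalan ballot table; summing then gives $f(h)=C_{h+1}$. But this is considerably more work than the paper's direct reduction to Segner's recursion, and it is not what your proposal describes.
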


\begin{proof}
Consider a sparse triangle $\triangle_{2,e-1}$ and let $(2,\be)$ be 
the highest black dot on the line $l_2$. There are no black dots above
the line  $l_\be$, for according to  Lemma \ref{sparse} the dot 
$(2,\beta)$ should lie in the
triangle of such a dot, implying that it lies on $l_2$, but
$(2,\be)$ is the highest black dot on that line. The triangle 
$\triangle_{\be,e-1}$ 
can be an arbitrary sparse triangle. The sparse triangle 
$\triangle_{3,\be}$ determines the colour of the remaining dots on the line
$l_2$: proceeding inductively downwards, the dot $(2,\ga)$ has to be black
if and only if there are exactly $\ga-\be$ black dots in the triangle
$\triangle_{2,\be}$, not lying in $\triangle_{2,\ga}$; as
$(2,\be)$ is black, there are at least $\ga-\be$ black dots in this complement.

This shows that the number $C_n$, $n=e-3$, of sparse  coloured triangles
of height $n-1$ satisfies Segner's
recursion formula for the Catalan numbers
\[
C_{n+1}=C_0C_n+C_1C_{n-1}+\cdots+C_{n-1}C_1+C_nC_0\;.
\]
For more information on the Catalan numbers, see \cite{st}.
\end{proof}

\begin{remark}
The Catalan number $C_{e-3}$ also counts the number of subdivisions
of an $(e-1)$-gon in triangles. An explicit bijection is as follows.
Mark, as in \cite{js}, a distinguished vertex and number the remaining
ones from 2 to $e-1$. If the vertices $\de$ and $\ep$ are joined
by a diagonal, then we colour the dot $(\de,\ep)$ black.
Conversely, given a triangle $\triangle_{2,e-1}$ we join
the vertices $\de$ and $\ep$ 
by a diagonal, if the dot $(\de,\ep)$ is black. By lemma \ref{sparse}
these diagonals do not intersect. We complete the subdivision
with diagonals through the distinguished vertex. Sometimes it is easier
to use subdivisions, but we will derive all facts we need directly
from the combinatorics of sparse triangles.
\end{remark}

To describe the equations we need the 
numbers $\cf k$ and  
$(\al_2,\dots,\al_{e-1})$. These are indeed the 
continued fractions
$\cf k$ representing zero \cite{ch} and the corresponding numbers  
satisfying $\al_{\ep-1}+\al_{\ep+1}=k_\ep\al_\ep$.
To define these numbers  out of a triangle we inductively give
weights to black dots. 

\begin{defn}\label{defak}
Let $\triangle_{2,e-1}$ be a sparse triangle. 
The weight $w_{\de,\ep}$ of a black dot
$(\de,\ep)$ is the sum of the weights of the dots lying in the sector above
it, increased by one: 
$w_{\de,\ep}= 1+ \sum_{\substack{\al<\de\\\be>\ep}} w_{\al,\be}$.
For $2\leq \ep\leq
e-1$ we define $\al_\ep$ as the sum of the weights of black dots
above the line $l_\ep$, increased by one:
$$
\al_\ep=1+\sum_{\substack{\al<\ep\\\be>\ep}} w_{\al,\be}\;.
$$
In particular, $\al_\ep=1$ if there are no black points above the line
$l_\ep$, so $\al_2=\al_{e-1}=1$.
\\
We set $k_\ep$ to be the number of black dots on the line $l_\ep$ in the
extended triangle  if $\al_\ep=1$ and this number minus $1$ otherwise.
\end{defn}

\begin{example}
\[
\begin{picture}(150,95)(-30,-25)
\put(0,0){\zwart}
\put(30,0){\zwart}
\put(60,0){\zwart}
\put(90,0){\zwart}
\put(-15,-15){\line(1,1){75}}
\put(-15,-15){\line(-1,1){15}}
\put(15,-15){\line(1,1){60}}
\put(15,-15){\line(-1,1){30}}
\put(45,-15){\line(1,1){45}}
\put(45,-15){\line(-1,1){45}}
\put(75,-15){\line(1,1){30}}
\put(75,-15){\line(-1,1){60}}
\put(105,-15){\line(1,1){15}}
\put(105,-15){\line(-1,1){75}}
\put(7.5,7.5){\line(1,0){75}}
\put(15,15){\zwart}
\put(45,15){\wit}
\put(75,15){\zwart}
\put(30,30){\wit}
\put(60,30){\wit}
\put(45,45){\zwart}
\put(15,20){\makebox(0,0)[b]{$\scriptstyle1$}}
\put(45,50){\makebox(0,0)[b]{$\scriptstyle1$}}
\put(75,20){\makebox(0,0)[b]{$\scriptstyle1$}}
\put(35,65){\makebox(0,0)[r]{$\scriptstyle k_6=3$}}
\put(20,50){\makebox(0,0)[r]{$\scriptstyle k_5=1$}}
\put(5,35){\makebox(0,0)[r]{$\scriptstyle k_4=3$}}
\put(-10,20){\makebox(0,0)[r]{$\scriptstyle k_3=1$}}
\put(-25,5){\makebox(0,0)[r]{$\scriptstyle k_2=3$}}
\put(105,-16){\makebox(0,0)[t]{$\scriptstyle\al_6=1$}}
\put(75,-16){\makebox(0,0)[t]{$\scriptstyle\al_5=3$}}
\put(45,-16){\makebox(0,0)[t]{$\scriptstyle\al_4=2$}}
\put(15,-16){\makebox(0,0)[t]{$\scriptstyle\al_3=3$}}
\put(-15,-16){\makebox(0,0)[t]{$\scriptstyle\al_2=1$}}
\end{picture}
\]
\end{example}

\begin{remarks}
1.\enspace We leave it as exercise to prove that the so defined numbers
$\al_\ep$ and $k_\ep$ satisfy $\al_{\ep-1}+\al_{\ep+1}=k_\ep\al_\ep$.
\\ 2.\enspace
We note the following alternative way to compute the $\al_\ep$
\cite[Bemerkung 1.7]{sb}. As mentioned, $\al_\ep=1$ if there are
no black dots above the line $l_\ep$. For every other index $\ep$
there exist unique $\be<\ep<\ga$, such that the intersection
of the lines $l_\be$, $l_\ep$ and $l_\ga$ (in the 
extended triangle) consists only of black dots. Then 
$\al_\ep=\al_\be+\al_\ga$. In fact, this is the way the numbers
are determined from a subdivision of a polygon. We sketch a proof.
Let $(\be,\ep)$ be the highest black dot on the left half-line of
$l_\ep$, and $(\ep,\ga)$ the highest black dot on the right
half-line. Let $(\al,\de)$ be a black dot above the line $l_\ep$,
such that $\triangle_{\al,\de}$ contains no other black dots above $l_\ep$.
Then $(\be,\ga)\preceq(\al,\de)$, and if $(\be,\ga)\neq(\al,\de)$,
then $\triangle_{\al,\de}$ does not contain enough dots. So if
$\al_\ep\neq1$, then $(\be,\ga)$ is black. Black dots above the
lines $l_\be$, $l_\ep$ and $l_\ga$ can only lie in the sector 
with  $(\be,\ga)$  as lowest point. One now computes
$\al_\ep=\al_\be+\al_\ga$. 
\end{remarks}
We can now describe the equations belonging to a sparse triangle
$\triangle_{2,e-1}$. To avoid cumbersome notation  
we only give the formula for the highest equation $z_1z_e=p_{2,e-1}$,
but this implies by obvious changes the formula for each
equation $z_{\de-1}z_{\ep+1}=p_{\de,\ep}$, as such an
equation is determined by its
own sparse triangle $\triangle_{\de,\ep}$, giving its own $\al$ and $k$
values.  We will specify in the text 
from which triangle a specific $\al_\ep$
or $k_\ep$ is computed, but we do not include this information in the
notation.

\begin{proposition}
Let the triangle $\triangle_{2,e-1}$  determine the numbers
$\al_\ep$, $k_\ep$, according to Definition \ref{defak}. 
Forming the equations by taking
$p_{\de,\ep} =\frac { p_{\de,\ep-1} p_{\de+1,\ep} }{z_{\de}z_{\ep}}$ 
if the dot $(\de,\ep)$ is black and
$p_{\de,\ep} =\frac { p_{\de,\ep-1} p_{\de+1,\ep} }{p_{\de+1,\ep-1}}$
otherwise, leads to the highest equation
\[
z_1z_e=\prod_{\be=2}^{e-1} \bigl(z_\be^{a_\be-k_\be}\bigr)^{\al_\be}\;.
\]
\end{proposition}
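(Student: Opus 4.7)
The plan is to prove the stronger statement by induction on $\ep-\de$: for every sparse sub-triangle $\triangle_{\de,\ep}$, the equation constructed by the recursive rule reads
\[
z_{\de-1}z_{\ep+1} = \prod_{\be=\de}^{\ep} \bigl(z_\be^{a_\be - k_\be}\bigr)^{\al_\be},
\]
with $\al_\be=\al_\be(\de,\ep)$ and $k_\be=k_\be(\de,\ep)$ computed from $\triangle_{\de,\ep}$. The base case $\ep=\de+1$ is the second pyramid line $p_{\de,\de+1}=z_\de^{a_\de-1}z_{\de+1}^{a_{\de+1}-1}$, which matches the formula since $\al_\de=\al_{\de+1}=k_\de=k_{\de+1}=1$ in the single-dot extended triangle. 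In the inductive step the recursive rule gives $p_{\de,\ep}=p_{\de,\ep-1}p_{\de+1,\ep}/D$, with $D=p_{\de+1,\ep-1}$ for a white vertex and $D=z_\de z_\ep$ for a black one. Substituting the inductive hypothesis for the three sub-triangles and matching coefficients of $z_\be$ reduces the claim to two identities: equality of the coefficient of $a_\be$ (an $\al$-recursion) and of the constant term (an $\al k$-recursion), for $\be\in\{\de+1,\dots,\ep-1\}$; the boundary cases $\be=\de$ and $\be=\ep$ are immediate because $\al_\be\equiv 1$ and $k_\be$ changes from the corresponding sub-triangle value by exactly $[(\de,\ep)\text{ black}]$.

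For the $\al$-identity I would use the interpretation that $\al_\be(\de,\ep)$ counts chains (including the empty chain) in the strict partial order $\prec$ among black dots of $\triangle_{\de,\ep}$ lying strictly above $l_\be$; this follows by unpacking Definition \ref{defak}, whose weights satisfy $w=1+\sum w_{\text{above}}$ and therefore count chains below, and the chain structure is guaranteed by Lemma \ref{sparse}. The key combinatorial fact is that a strict chain above $l_\be$ cannot simultaneously contain a dot with $\al=\de$ and a dot with $\ga=\ep$: either direction of comparability would require $\al_Y<\de$ or $\ga_Y>\ep$, which is impossible in $\triangle_{\de,\ep}$. Hence ordinary inclusion--exclusion over $\triangle_{\de,\ep-1}$, $\triangle_{\de+1,\ep}$ and $\triangle_{\de+1,\ep-1}$ enumerates all chains not containing the vertex $(\de,\ep)$. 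When the vertex is black it can be appended to any chain of $\triangle_{\de+1,\ep-1}$, contributing an additional $\al_\be(\de+1,\ep-1)$; when white it contributes nothing. This yields exactly the two required $\al$-recursions.

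For the $\al k$-identity I would set up the parallel interpretation
\[
\al_\be(\de,\ep)\,k_\be(\de,\ep) = \#\bigl\{(C,D) : C \text{ chain above } l_\be,\ D \text{ black dot on } l_\be,\ \{D\}\cup C \text{ strict chain}\bigr\},
\]
which can be checked case by case according to whether any black dot lies strictly above $l_\be$ (using $k_\be=N_\be$ when $\al_\be=1$ and $k_\be=N_\be-1$ otherwise). The same inclusion--exclusion, now applied to pairs rather than to chains, handles those pairs whose chain avoids $(\de,\ep)$; when the vertex is black, pairs whose chain contains $(\de,\ep)$ split by whether the subchain strictly below $(\de,\ep)$ is empty, and a short bijective count identifies their total with all pairs in $\triangle_{\de+1,\ep-1}$, providing the extra term $\al_\be(\de+1,\ep-1)k_\be(\de+1,\ep-1)$. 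The main obstacle is verifying the combinatorial interpretation of $\al_\be k_\be$ and confirming that no ``mixed'' pairs fall outside the inclusion--exclusion; once this is established, the induction runs uniformly and produces the claimed formula.
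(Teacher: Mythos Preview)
Your approach is genuinely different from the paper's. The paper runs the same induction, but at the inductive step it does a direct case analysis: for fixed $\be$ it splits on the colour of the vertex $(\de,\ep)$ and then on whether the extreme dots $(\de,\be)$ and $(\be,\ep)$ of $l_\be$ are black, comparing the values of $\al_\be$ and $k_\be$ in the four relevant (extended) triangles straight from Definition~\ref{defak}. This produces seven explicit subcases but needs no auxiliary combinatorial lemma. Your route via chain and pair counts is more structural and handles the $\al$-recursion very cleanly.

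Two points need attention. First, the ``strict partial order $\prec$'' you use must be the \emph{both-strict} relation $(\al,\ga)\ll(\be,\de)\iff \al>\be$ and $\ga<\de$, because the weight recursion in Definition~\ref{defak} sums only over the open sector $\al<\de$, $\be>\ep$. With the ordinary strict version of $\preceq$ your chain count is already wrong in the paper's running example: the black dots $(2,4)$ and $(2,6)$ above $l_3$ are $\prec$-comparable but not $\ll$-comparable, so the chain count would give $4$ instead of the correct $\al_3=3$. Your ``key combinatorial fact'' and the inclusion--exclusion remain valid under $\ll$, so this is only a matter of making the definition precise.

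Second, and more substantively, the pair interpretation $\al_\be k_\be=\#\{(C,D):\{D\}\cup C\text{ is a $\ll$-chain}\}$ is correct but is not a straightforward unpacking of the definition, because of the case split $k_\be=N_\be-[\al_\be>1]$. When $S\neq\emptyset$ you must show that exactly $\al_\be$ of the $\al_\be N_\be$ pairs are invalid. This comes down to the following structural fact about sparse triangles: the unique $\preceq$-minimal element $c_0$ of $S$ has no $\ll$-successor in $S$ and exactly two dots of $L$ fail $D\ll c_0$, whereas every other $c\in S$ has exactly one such $D$. Proving this uses the sparse structure (essentially Remark~2 after Definition~\ref{defak}) and is comparable in effort to the paper's subcases; once it is established, your inclusion--exclusion on pairs and the bijection for chains through the vertex go through exactly as you outline.
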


\begin{proof}
We fix an index $\ep$ and look at the $z_\ep$-factor in $p_{2,e-1}$.
The proof proceeds by induction on $e$, i.e., on the height of the triangle. 
The base of the induction
is formed by the equations $z_{\ep-1}z_{\ep+1}=z_\ep^{a_\ep}$,
which correspond to empty extended triangles, with $\al_\ep=1$
and $k_\ep=0$.\\
Suppose that the formula is proved for all $p_{\de,\ep}$ with 
$\ep-\de<e-3$. There are two cases, depending on the colour
of the dot $(2,e-1)$.\\
{\em Case\/} 1:
the dot $(2,e-1)$ is white. Then we have to compare
$p_{2,e-1}$, $p_{2,e-2}$, $p_{3,e-1}$ and $p_{3,e-2}$ and the values
of $\al_\ep$ and $k_\ep$ computed from the corresponding triangles.
There are several sub-cases. In the first two we assume that
$\ep\neq2,e-1$. \\
1.a: Suppose both $(2,\ep)$ and $(\ep,e-1)$ are black. A black dot above the
line $l_\ep$ should contain both these points in its triangle, so it can
only be $(2,e-1)$, which however is assumed to be
uncoloured. Therefore $\al_\ep=1$ in all the relevant triangles. We have
that there are $k_\ep$ dots on the line $l_\ep$ in the extended triangle
${\underline\triangle}_{2,e-1}$,
$k_\ep-1$ in ${\underline\triangle}_{2,e-2}$ and 
${\underline\triangle}_{3,e-1}$,
and $k_\ep-2$ in ${\underline\triangle}_{3,e-2}$. So indeed
the $z_\ep$ factor in $p_{2,e-1}$ is equal to
$ z_\ep^{a_\ep-k_\ep+1}\cdot z_\ep^{a_\ep-k_\ep+1}/ z_\ep^{a_\ep-k_\ep+2}=
z_\ep^{a_\ep-k_\ep}$.\\
1.b: Otherwise the segments $(2,\ep)-(2,e-1)$ and $(\ep,e-1)-(2,e-1)$ cannot
both contain black dots. Suppose the first segment, on $l_2$, is empty (in
particular $(2,\ep)$ is white). Then the number of black dots
on the line $l_\ep$ is equal in both 
${\underline\triangle}_{2,e-1}$ and ${\underline\triangle}_{3,e-1}$,
being equal to $k_\ep$ or $k_\ep+1$ depending on the
value of $\al_\ep$, and one computes also the same value for $\al_\ep$.
Also ${\underline\triangle}_{2,e-2}$ and ${\underline\triangle}_{3,e-2}$
yield the same values $\al_\ep'$ and $k_\ep'$, so we get
$ \bigl(z_\ep^{a_\ep-k_\ep}\bigr)^{\al_\ep}
\cdot \bigl(z_\ep^{a_\ep-k_\ep'}\bigr)^{\al'_\ep}
/ \bigl(z_\ep^{a_\ep-k_\ep'}\bigr)^{\al'_\ep}=
\bigl(z_\ep^{a_\ep-k_\ep}\bigr)^{\al_\ep}$.\\
1.c: Suppose that $\ep=2$ or $\ep=e-1$. Consider the first case. The monomial
$z_2$ does not occur in $p_{3,e-1}$ and $p_{3,e-2}$. Always $\al_2=1$ and
the number of dots on $l_2$ is the same in both relevant triangles.\\
{\em Case\/} 2:
the dot $(2,e-1)$ is  black. We have to compare the values
of $\al_\ep$ and $k_\ep$ in  $p_{2,e-1}$, $p_{2,e-2}$
and $p_{3,e-1}$. Again we consider $\ep=2,e-1$ separately. \\
2.a:  Suppose both $(2,\ep)$ and $(\ep,e-1)$ are black. Then $(2,e-1)$ is the
only black dot above the line $l_\ep$, which makes $\al_\ep=2$, whereas
$\al_\ep=1$ in both smaller triangles. The number  of black dots on the line
$l_\ep$ is $k_e+1$ in ${\underline\triangle}_{2,e-1}$, and  $k_e$ 
in ${\underline\triangle}_{2,e-2}$ and ${\underline\triangle}_{3,e-1}$. 
So the $z_\ep$ factor in $p_{2,e-1}$ is equal to
$ z_\ep^{a_\ep-k_\ep}\cdot z_\ep^{a_\ep-k_\ep}=
\bigl(z_\ep^{a_\ep-k_\ep}\bigr)^2$.\\
2.b: Suppose  $(2,\ep)$ is black and $(\ep,e-1)$ not. Then all black points 
above
$l_\ep$ lie on $l_2$, all having weight 1, and there is at least one of them
between $(2,\ep)$ and $(2,e-1)$, as $(\ep,e-1)$ is not black. There are
$k_\ep+1$ points on $l_\ep$ in ${\underline\triangle}_{2,e-1}$
and ${\underline\triangle}_{2,e-2}$,  and $k_ \ep$ in
${\underline\triangle}_{3,e-1}$. The last triangle gives the value
$\al_\ep=1$ in $p_{3,e-1}$,
whereas $\al_\ep>1$ in the first two, and the value from
${\underline\triangle}_{2,e-1}$ is
one more than that from ${\underline\triangle}_{2,e-2}$.
So the $z_\ep$ factor in $p_{2,e-1}$ is equal to
$ \bigl(z_\ep^{a_\ep-k_\ep}\bigr)^{\al_\ep-1}\cdot z_\ep^{a_\ep-k_\ep}=
\bigl(z_\ep^{a_\ep-k_\ep}\bigr)^{\al_\ep}$. \\
2.c:  Suppose both $(2,\ep)$ and $(\ep,e-1)$ are white. The number of
dots on $l_\ep$ is the same in all three relevant triangles. In the largest
one $\al_\ep>1$, so to get the same value for $k_\ep$ we need that
$\al_\ep>1$ also in both other triangles. This means that the triangle
$\triangle_{3,e-2}$ has to have a dot above the line $l_\ep$.
Of the segments of
$l_2$ and $l_{e-1}$ above $l_\ep$ only one can contain black dots besides
the vertex. Suppose $(j,e-1)$ is the lowest dot of the segment on $l_{e-1}$.
The number of dots in $\triangle_{j,e-1}$ on or under
$l_\ep$ is at most $(e-1-\ep-2)+(\ep-j-1)=(e-1-j-3)$. As the triangle
contains exactly $e-1-j-2$ dots other than the vertex, there has to be
a dot above $l_\ep$, which does not lie on $l_{e-1}$ due to the choice of
$(j,e-1)$. To compute $\al_\ep$ we have to look at the weights. With the
convention that points above a triangle have weight 0, the inductive formula
holds for points in a sub-triangle with summation over all points 
in the sector of the big triangle.
We show by induction that for all points except the vertex $(2,e-1)$ the
weight $w_{i,j}$ computed from the big triangle, equals the sum of the
weights $w'_{i,j}$ from $\triangle_{2,e-2}$ and
$w''_{i,j}$ from $\triangle_{3,e-1}$. Indeed,
$w_{i,j}=1+\sum w_{k,l}= 1+1+\sum_{(k,l)\neq(2,e-1)}w_{k,l}=
1+ \sum w'_{k,l}+1+\sum w''_{k,l}=w'_{i,j}+w''_{i,j}$. The same
computation shows that also the values of $\al_\ep$  add. So indeed the
$z_\ep$ factor in $p_{2,e-1}$ is the product of those in $p_{2,e-2}$
and $p_{3,e-1}$.\\
2.d: If $\ep=2$, then $\al_2=1$ and the number of points on $l_2$ in 
${\underline\triangle}_{2,e-1}$ is $k_2$, whereas it is $k_2-1$ in 
${\underline\triangle}_{2,e-2}$.
The $z_2$ factor in $p_{2,e-1}$ is $z_2^{a_2-k_2+1}/z_2=z_2^{a_2-k_2}$.
The case $\ep=e-1$ is similar.
\end{proof}

As in the case of embedding dimension 5 we can now deform.
We perturb each term $z_\ep^{a_\ep-k_\ep}$ to
\[
Z_\ep^{(a_\ep-k_\ep)} =
z_\ep ^{a_\ep -k_\ep}+\wt s_\ep^{(1)}z_\ep ^{a_\ep -k_\ep-1}+\dots+
\wt s_\ep^{(a_\ep -k_\ep)}\;.
\]
Here we write $\wt s_\ep^{(j)}$, as these variables are not quite the same
as the coordinates  $s_\ep^{(j)}$ on $T^1$, specified by the equations 
\eqref{infdef}.
The relation is the following: if $\al_\ep>1$, we set $t_\ep=0$, and
$\wt s_\ep^{(j)}=s_\ep^{(j)}$, but if $\al_\ep=1$
one has
\begin{multline*}
(z_{\ep}+t_{\ep})^{\al_{\ep-1}}
(z_\ep ^{a_\ep -k_\ep}+\wt s_\ep^{(1)}z_\ep ^{a_\ep -k_\ep-1}+\dots+
\wt s_\ep^{(a_\ep -k_\ep)})z_\ep^{\al_{\ep+1}} \\
 {}=(z_{\ep}+t_{\ep})(z_\ep^{a_\ep-1}
 + s_\ep^{(1)}z_\ep^{a_\ep-2}+\dots + s_\ep^{(a_\ep-1)})\;.
\end{multline*}
This formula makes sense, as  $\al_\ep k_\ep=\al_{\ep-1}+\al_{\ep+1}$,
so for $\al_\ep=1$ one has $k_\ep=\al_{\ep-1}+\al_{\ep+1}$.

\begin{proposition}
Let $\triangle_{2,e-1}$ be a sparse triangle.
Put $t_\ep=0$, if $\al_\ep>1$. Now form the equations
$z_{\de-1}(z_{\ep+1}+t_{\ep+1})=P_{\de,\ep}$,
starting from 
\[
P_{\ep,\ep}= (z_{\ep}+t_{\ep})^{\al_{\ep-1}}
(z_\ep ^{a_\ep -k_\ep}+\wt s_\ep^{(1)}z_\ep ^{a_\ep -k_\ep-1}+\dots+
\wt s_\ep^{(a_\ep -k_\ep)})z_\ep^{\al_{\ep+1}} \;,
\]
if $\al_\ep=1$ and 
\[
P_{\ep,\ep}= 
(z_\ep ^{a_\ep -k_\ep}+ s_\ep^{(1)}z_\ep ^{a_\ep -k_\ep-1}+\dots+
 s_\ep^{(a_\ep -k_\ep)})z_\ep^{k_{\ep}} 
\]
otherwise. Take 
$ P_{\de,\ep} =\frac {  P_{\de,\ep-1}  P_{\de+1,\ep} }
   {z_{\de}(z_{\ep}+t_\ep)}$ 
if the dot $(\de,\ep)$ is black and
$ P_{\de,\ep} =\frac {  P_{\de,\ep-1}  P_{\de+1,\ep}}
{ P_{\de+1,\ep-1}}$
otherwise. This gives the highest equation
\[
z_1z_e=\prod_{\be=2}^{e-1} \bigl(Z_\be^{(a_\be-k_\be)}\bigr)^{\al_\be}\;.
\]
These equations define a flat deformation of the cyclic quotient 
singularity $X\bcf a$.
\end{proposition}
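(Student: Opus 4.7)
The proof splits into two assertions: the recursion produces the claimed formula for $z_1z_e$ (and by the same reasoning applied to every subtriangle, the analogous formula for each $P_{\de,\ep}$), and the resulting family is flat over the parameter space spanned by the $s_\ep^{(j)}$, $\wt s_\ep^{(j)}$, and $t_\ep$.

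For the formula I would induct on the height $\ep-\de-1$ of the subtriangle $\triangle_{\de,\ep}$, following exactly the structure of the previous proposition's proof. The base case is the explicit definition of $P_{\ep,\ep}$. For the inductive step, the case analysis (colour of the vertex $(\de,\ep)$; on each line $l_\nu$, the colours of $(\de,\nu)$ and $(\nu,\ep)$) is formally identical to the undeformed one: the $Z_\nu^{(a_\nu-k_\nu)}$-factors behave as formal variables under multiplication and division, so the exponent arithmetic driving the previous proof carries over verbatim. What is new, and requires care, is the treatment of the boundary factors $(z_\nu + t_\nu)^{\al_{\nu-1}}$ and $z_\nu^{\al_{\nu+1}}$ arising at the diagonal whenever $\al_\nu = 1$. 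They appear only on the boundary lines $l_\de, l_\ep$ of a subtriangle and must be tracked to confirm both that the recursion's divisions are \emph{exact} (yield polynomials) and that all such factors cancel in the final equation $z_1 z_e = \prod (Z_\be^{(a_\be-k_\be)})^{\al_\be}$.

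For flatness, the recursive definition is engineered precisely so that the relations among the $P_{\de,\ep}$ lift the Riemenschneider-type syzygies of $X\bcf a$. The white-dot rule encodes the polynomial identity $P_{\de+1,\ep-1} P_{\de,\ep} = P_{\de,\ep-1} P_{\de+1,\ep}$, and the black-dot rule encodes $z_\de(z_\ep + t_\ep) P_{\de,\ep} = P_{\de,\ep-1} P_{\de+1,\ep}$; both hold on the nose, since the $P_{\de,\ep}$ were \emph{defined} by those quotients and exactness of the quotient is what the first part of the proof establishes. Substituting the defining equations $F_{\de,\ep} = z_{\de-1}(z_{\ep+1}+t_{\ep+1}) - P_{\de,\ep}$ converts these identities into relations among the $F_{\de,\ep}$ which specialise, at zero deformation parameters, to generators of the syzygy module of the ideal of $X\bcf a$. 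Since the parameter space is a smooth affine space, lifting of all Riemenschneider syzygies implies flatness by the standard criterion.

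The main obstacle I anticipate is the boundary-factor bookkeeping in the first part. One wants the formula $P_{\de,\ep} = \prod(Z_\be^{(a_\be-k_\be)})^{\al_\be}$, with $\al$ and $k$ computed from $\triangle_{\de,\ep}$, to hold only up to controlled $(z_\nu + t_\nu)$ and $z_\nu$ factors on the boundary lines, and the delicate point is how these factors transform as a subtriangle embeds in a larger one and a formerly boundary index $\nu$ becomes interior. The convention $t_\nu = 0$ whenever $\al_\nu > 1$ is precisely what makes the transformation work, but verifying it requires walking carefully through each sub-case of the inductive step with the extra factors in hand.
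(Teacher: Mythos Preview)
Your proposal is correct and aligns with the paper's treatment. The paper does not give a self-contained proof here: it simply states that flatness is proved explicitly in \cite[2.1.2]{ch} and \cite[2.2]{sb} and remarks that it is ``of course a consequence of the inductive definition of the polynomials $P_{\de,\ep}$''; your syzygy-lifting argument is precisely that consequence spelled out, and your plan to rerun the previous proposition's induction for the product formula (treating each $Z_\be^{(a_\be-k_\be)}$ as a formal symbol) is the natural route, with the boundary-factor bookkeeping you flag being exactly the extra work the deformed setting requires.
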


The flatness is proved explicitly in \cite[2.1.2]{ch} and \cite[2.2]{sb}.
It is of course a consequence of the inductive definition of the
polynomials $ P_{\de,\ep}$.

In fact, one gets in this way exactly all reduced components of the  
versal deformation. This was proved in
\cite{js} using Koll\'ar and Shepherd-Barron's 
description \cite{ksb} of smoothing components as deformation spaces of 
certain partial resolutions.  A more elementary (but not easier)
approach would be to use the equations for the base space of the versal
deformation, which we describe in the next section.


\section{Versal deformation}
In this section we derive the equations for the versal deformation.
We have to write the pyramid of equations, as in the example of
embedding dimension five. The base line 
consist of the equations (\ref{infdef}).
These equations are lacking in symmetry: when introducing the 
deformation variables $t_\ep$, say in the quasi-determinantal, there
is a choice of writing them in the upper or the lower row. Arndt \cite{ar}
formally symmetrises by setting $y_\ep=z_\ep+t_\ep$.
We go one step further and replace $t_\ep$ by two deformation
variables. This makes that our deformation is versal, but no longer 
miniversal. Furthermore, there is no $t_2$ and $t_{e-1}$, but in order to
avoid special cases, we allow the index $\ep$ in $t_\ep$ to take the
values $2$ and $e-1$. 
%

We start from the equations $z_{\ep-1}z_{\ep+1} =z_\ep^{a_\ep}$, which
we deform into 
\begin{equation}\label{infdeg}
 (z_{\ep-1}-l_{\ep-1})(z_{\ep+1}-r_{\ep+1}) =z_\ep^{a_\ep}
 + \si_\ep^{(1)}z_\ep^{a_\ep-1}+\dots + \si_\ep^{(a_\ep)}\;.
\end{equation}
We abbreviate $z_\ep-r_\ep=R_\ep$ and $z_\ep-l_\ep=L_\ep$. The minus
sign is introduced to simplify the conditions for divisibility by 
$R_\ep$ or $L_\ep$, which will be the main ingredient in our
description of the base space. We write the equation (\ref{infdeg})
shortly as
\[
L_{\ep-1}R_{\ep+1}=Z_\ep^{(00)}\;.
\]
As written, we do not even get an infinitesimal deformation:
one needs $\si_\ep^{(a_\ep)}\equiv0$ modulo the  square of the
maximal ideal of the parameter space. Comparison with  
equation (\ref{infdef}) shows that $Z_\ep^{(00)}(z_\ep)$ 
(we use this notation to emphasise that we consider $Z_\ep^{(00)}$
as polynomial in $z_\ep$) has to be divisible by
$R_\ep$, i.e, $Z_\ep^{(00)}(r_\ep)=0$. This gives an equation
with non-vanishing linear part.
We could as well require divisibility by $L_\ep$. This gives an
equation $Z_\ep^{(00)}(l_\ep)=0$ with the same linear part.

In fact, we shall assume both conditions, $Z_\ep^{(00)}(r_\ep)=0$
and $Z_\ep^{(00)}(l_\ep)=0$. This yields then one equation
with non-vanishing linear part, and one equation, not involving
$\si_\ep^{(a_\ep)}$ at all, which factorises:
\[
Z_\ep^{(00)}(l_\ep)-Z_\ep^{(00)}(r_\ep)=(l_\ep-r_\ep)\si_\ep^{(11)}=0\;.
\]
This formula defines $ \si_\ep^{(11)}$, which should not be confused
with one of the variables in equation (\ref{infdeg}).
Those variables do not play a prominent role in the computations to
come. They are important for the momodromy covering of the
versal deformation, as noted by Riemenschneider and studied by
Brohme \cite{sb}. There is a large covering, which induces the
monodromy covering of each reduced component, obtained
by considering the $\si_\ep^{(i)}$ as elementary symmetric functions
in new variables. For details we refer to \cite{sb}.

We have to give the other equations. They will have the form
\[
L_{\de-1}R_{\ep+1}= P_{\de,\ep}\:.
\]
The polynomials $P_{\de,\ep}$ will be well defined modulo the
ideal $J$, generated by the equations of the base space. To 
describe them we 
perform division with remainder. 
\begin{defn}\label{deflr}
We inductively define
polynomials $Z_\ep^{(ij)}$ in the variable $z_\ep$, 
starting from $Z_\ep^{(00)}=z_\ep^{a_\ep}
 + \si_\ep^{(1)}z_\ep^{a_\ep-1}+\dots + \si_\ep^{(a_\ep)}$, by division
by $L_\ep$
\begin{equation}\label{links}
Z_\ep^{(ij)}=L_\ep Z_\ep^{(i+1,j)}+\si_\ep^{(i+1,j)}\;,
\end{equation}
and by $R_\ep$
\begin{equation}\label{rechts}
Z_\ep^{(ij)}= Z_\ep^{(i,j+1)}R_\ep+\si_\ep^{(i,j+1)}\;.
\end{equation}
\end{defn}
Note that $\si_\ep^{(i+1,j)}=Z_\ep^{(ij)}(l_\ep)$, and 
$\si_\ep^{(i,j+1)}=Z_\ep^{(ij)}(r_\ep)$.
From the equations (\ref{links}) or (\ref{rechts}) we obtain by 
substituting that
\begin{equation}\label{linrel}
\si_\ep^{(i+1,j)}-\si_\ep^{(i,j+1)}=
(l_\ep-r_\ep)\si_\ep^{(i+1,j+1)}\;.
\end{equation}
The condition $Z_\ep^{(00)}(l_\ep)=Z_\ep^{(00)}(r_\ep)=0$ translates into
$\si_\ep^{(10)}=\si_\ep^{(01)}=0$ and we can write
\[
Z_\ep^{(00)}=L_\ep Z_\ep^{(10)}= Z_\ep^{(01)}R_\ep\;.
\]

The next line in the pyramid of equations can now be computed:
\[
L_{\ep-2}R_{\ep+1}=
\frac{(L_{\ep-2}R_{\ep-1})(L_{\ep-1}R_{\ep+1})}{L_{\ep-1}R_{\ep}}= 
\frac{Z_{\ep-1}^{(00)}Z_{\ep}^{(00)}}{L_{\ep-1}R_{\ep}}=
Z_{\ep-1}^{(10)}Z_{\ep}^{(01)}\;.
\]

For the higher lines we do not quite
proceed as before, when describing the components. Computing with
$L_{\de-1}R_{\ep+1}=(L_{\de-1}R_{\ep})(L_{\de}R_{\ep+1})
/(L_{\de}R_{\ep})$ would be too complicated. Instead
we take the asymmetric approach $L_{\de-1}R_{\ep+1}=(L_{\de-1}R_{\ep})
(L_{\ep-1}R_{\ep+1})/(L_{\ep-1}R_{\ep})=P_{\de,\ep-1}
Z_\ep^{(01)}/L_{\ep-1}$.

We do the next step:
\[
L_{\ep-3}R_{\ep+1}=\frac{P_{\ep-2,\ep-1}Z_{\ep}^{(01)}}{L_{\ep-1}}
=\frac{Z_{\ep}^{(10)}Z_{\ep-1}^{(01)}Z_{\ep}^{(01)}}{L_{\ep-1}}\;,
\]
where we now have to use the division with remainder
$Z_{\ep-1}^{(01)}=L_{\ep-1} Z_{\ep-1}^{(11)}+\si_{\ep-1}^{(11)}$
of equation (\ref{links}) to get
\[
L_{\ep-3}R_{\ep+1}=
Z_{\ep-2}^{(10)}Z_{\ep-1}^{(11)}Z_{\ep}^{(01)}+
\frac{Z_{\ep-2}^{(10)}\si_{\ep-1}^{(11)}Z_{\ep}^{(01)}}{L_{\ep-1}}\;.
\]
This is not the final formula, as we can pull out a factor $L_{\ep-2}$
from $Z_{\ep-2}^{(10)}$ and $R_{\ep}$ from $Z_{\ep}^{(01)}$ by division
with remainder. Doing this successively and then using 
$L_{\ep-2}R_{\ep}=L_{\ep-1} Z_{\ep-1}^{(10)}$ gives us
\begin{multline}\label{mindrie}
L_{\ep-3}R_{\ep+1}=
Z_{\ep-2}^{(10)}Z_{\ep-1}^{(11)}Z_{\ep}^{(01)}+
\frac{L_{\ep-2}Z_{\ep-2}^{(20)}\si_{\ep-1}^{(11)}Z_{\ep}^{(01)}}{L_{\ep-1}}+
\frac{\si_{\ep-2}^{(20)}\si_{\ep-1}^{(11)}Z_{\ep1}^{(01)}}{L_{\ep-1}}\\
{}=
Z_{\ep-2}^{(10)}Z_{\ep-1}^{(11)}Z_{\ep}^{(01)}+
\frac{L_{\ep-2}Z_{\ep-1}^{(20)}\si_{\ep-1}^{(11)}
         Z_{\ep}^{(02)}R_{\ep}}{L_{\ep-1}}+
\frac{\si_{\ep-2}^{(20)}\si_{\ep-1}^{(11)}Z_{\ep}^{(01)}}{L_{\ep-1}}+
\frac{L_{\ep-2}Z_{\ep-2}^{(20)}\si_{\ep-1}^{(11)}\si_{\ep}^{(02)}}{L_{\ep-1}}
\\
{}=
Z_{\ep-2}^{(10)}Z_{\ep-1}^{(11)}Z_{\ep}^{(01)}+
Z_{\ep-2}^{(20)}\si_{\ep-1}^{(11)}Z_{\ep-1}^{(10)}Z_{\ep}^{(02)}+
\frac{\si_{\ep-2}^{(20)}\si_{\ep-1}^{(11)}Z_{\ep}^{(01)}}{L_{\ep-1}}+
\frac{L_{\ep-2}Z_{\ep-2}^{(20)}\si_{\ep-1}^{(11)}\si_{\ep}^{(02)}}{L_{\ep-1}}
\;.
\end{multline}
Further steps are not possible. For the formula to be polynomial
we need that the last two summands vanish. We obtain the equations
\begin{equation}\label{larh}
\la_{\ep-2,\ep-1}:=\si_{\ep-2}^{(20)}\si_{\ep-1}^{(11)}=0,
\qquad
\rh_{\ep-1,\ep}:=\si_{\ep-1}^{(11)}\si_{\ep}^{(02)}=0\:
\end{equation}
in the deformation variables. 

\begin{example}[embedding dimension 5]
The computations up to now suffice. 
We get, modulo the ideal of the base space,
the same equations as equations \eqref{vdvijf}
in Section \ref{inbvijf}. To translate in
the notation used there, note that there are no variables $t_2$ and $t_4$,
so we set $l_2=r_2=l_4=r_4=0$, and we take $l_3=0$, $r_3=-t_3$.
One gets $Z_2^{(k0)}=Z_2^{(a_2-k)}$ and $Z_4^{(0k)}=Z_4^{(a_4-k)}$,
$\si_2^{(20)}=s_2^{(a_2-1)}$ and $\si_4^{(02)}=s_4^{(a_4-1)}$.
For $\ep=3$ we find
\[
Z_3^{(00)}=Z_3^{(01)}R_3=Z_3^{(a_3-1)}(z_3+t_3)=L_3Z_3^{(10)}
=z_3\wt Z_3^{(a_3-1)}
\]
and
\[
Z_3^{(01)}=L_3Z_3^{(11)}+\si_3^{(11)}=z_3Z_3^{(a_3-2)}+s_3^{(a_3-1)}\;.
\]
The formula \eqref{mindrie} gives $\wt Z_3^{(a_3-1)}$ as factor in the
second summand of the right-hand side of the equation $z_1z_5=P_{2,4}$,
but the difference with $ Z_3^{(a_3-1)}$, as given
in the equations \eqref{vdvijf}, lies in the ideal of the base
space. Note that in general 
\begin{multline*}
Z_\ep^{(i+1,j)}-Z_\ep^{(i,j+1)}\\{}=
(Z_\ep^{(i+1,j+1)}R_\ep+\si_\ep^{(i+1,j+1)})-
(L_\ep Z_\ep^{(i+1,j+1)}+\si_\ep^{(i+1,j+1)})
=
(l_\ep-r_\ep)Z_\ep^{(i+1,j+1)}\;.
\end{multline*}
The factor $\si_3^{(11)}$ in the second summand gives that we can use
the equation $(l_3-r_3)\si_3^{(11)}=t_3s_3^{(a_3-1)}=0$.
\end{example}

We obtained the equations (\ref{larh}) as necessary condition
to find a polynomial $P_{\ep-1,\ep+1}$. We observe that they could
be computed before computing  $P_{\ep-1,\ep+1}$, as they are the result
of suitable substitutions in the right hand side of the equations of the 
previous line: $\la_{\ep-1,\ep}=\si_{\ep-1}^{(20)}\si_{\ep}^{(11)}$ 
is gotten by setting $z_{\ep-1}=l_{\ep-1}$ and $z_\ep=l_\ep$ in
$P_{\ep-1,\ep}=Z_{\ep-1}^{(10)}Z_{\ep}^{(01)}$, while $P_{\ep,\ep+1}$
gives $\rh_{\ep,\ep+1}$ by $z_\ep=r_\ep$ and $z_{\ep+1}=r_{\ep+1}$.

To find the versal deformation in general one has to proceed
in the same way for the higher lines of the pyramid. Arndt 
has shown that this works. 
As the proof is only written in his thesis \cite{ar}, we sketch it 
here.

\begin{theorem}\label{arndtthm}
Let $z_{\de-1}z_{\ep+1}=p_{\de,\ep}$, $2\leq\de\leq\ep\leq e-1$,
be the quasi-determinantal equations for a cyclic quotient singularity $X$
of embedding dimension $e$.
There exists a deformation $L_{\de-1}R_{\ep+1}=P_{\de,\ep}$ of
these equations over a base space, whose ideal $J$
has $\dim T^2_X=(e-2)(e-4)$
generators, being $(l_\ep-r_\ep)\si_e^{(11)}$ for $3\leq\ep\leq e-2$,
$\la_{\de,\ep}$ for $2\leq\de<\ep\leq e-2$, and
$\rh_{\de,\ep}$ for $3\leq\de<\ep\leq e-1$. The polynomials $P_{\de,\ep}$
can be determined inductively, followed by 
$\la_{\de,\ep}=P_{\de,\ep}|_{z_\be=l_\be}$ and 
$\rh_{\de,\ep}=P_{\de,\ep}|_{z_\be=r_\be}$, where $\de\leq\be\leq\ep$.
This deformation is versal.
\end{theorem}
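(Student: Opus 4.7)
The plan is to follow the inductive scheme illustrated by the computation leading to \eqref{mindrie}, constructing the pyramid row by row and reading off the obstruction relations as they appear. The induction is on $\ep-\de$. The base case $\ep-\de=0$ is the bottom row \eqref{infdeg}, together with the two divisibility conditions $Z_\ep^{(00)}(l_\ep)=Z_\ep^{(00)}(r_\ep)=0$; imposing both simultaneously forces the first batch of generators $(l_\ep-r_\ep)\si_\ep^{(11)}$ of $J$, for $3\leq\ep\leq e-2$.

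For the inductive step I would start from the asymmetric recursion
\[
L_{\de-1}R_{\ep+1}\;=\;\frac{P_{\de,\ep-1}\,Z_\ep^{(01)}}{L_{\ep-1}}
\]
and apply the division rules \eqref{links} and \eqref{rechts} to pull factors $L_{\de-1},\dots,L_{\ep-2}$ out of the left-hand factors of $P_{\de,\ep-1}$ and a factor $R_\ep$ out of $Z_\ep^{(01)}$. Each division produces a main polynomial term which recombines, via previously established identities of the shape $L_{\be-1}R_{\be+1}=L_\be Z_\be^{(10)}$, into a genuine polynomial $P_{\de,\ep}$, plus a remainder of the form $(\text{product of }\si\text{'s})/L_{\ep-1}$. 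Polynomiality of $L_{\de-1}R_{\ep+1}$ forces these remainders to vanish; after maximal cancellation their shape is precisely that of $\si_\de^{(\ast,0)}\si_{\de+1}^{(1,1)}\cdots\si_{\ep-1}^{(1,1)}\si_\ep^{(\ast,0)}$-type products, producing the new $\la_{\de,\ep}$ generators of $J$, and symmetrically $\rh_{\de,\ep}$ when one performs the dual computation from the right. A parallel induction checks that these expressions coincide with the substitution formulas $\la_{\de,\ep}=P_{\de,\ep}|_{z_\be=l_\be}$ and $\rh_{\de,\ep}=P_{\de,\ep}|_{z_\be=r_\be}$ for $\de\leq\be\leq\ep$.

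The main technical hurdle is the bookkeeping in the inductive step: verifying that, after the cascade of nested divisions, the non-polynomial remainders really collapse modulo the part of $J$ generated at lower levels to exactly one new $\la_{\de,\ep}$ and one new $\rh_{\de,\ep}$ at each step, and that the substitution description is internally consistent with the inductive formula for $P_{\de,\ep}$. Following Arndt \cite{ar}, I would argue this by showing that after the substitution $z_\be=l_\be$ the recursion for $P_{\de,\ep}$ degenerates to a product of $\si$-symbols in which, modulo the previously established generators of $J$, every factor pair except the one designated as the new generator is redundant.

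Finally, for versality I would count: there are $e-4$ generators of type $(l_\ep-r_\ep)\si_\ep^{(11)}$, plus $\binom{e-3}{2}$ relations $\la_{\de,\ep}$ and $\binom{e-3}{2}$ relations $\rh_{\de,\ep}$, for a total of $(e-4)+2\binom{e-3}{2}=(e-2)(e-4)=\dim T^2_X$ by Riemenschneider \cite{ri}. Combined with the fact that the parameters $\si_\ep^{(i)}$ together with $l_\ep,r_\ep$ surject onto $T^1_X$ (the doubling of each $t_\ep$ into an $(l_\ep,r_\ep)$ pair contributing only a smooth factor), and that the quadratic initial parts of the obstruction equations are linearly independent in $T^2_X$, the standard obstruction-calculus criterion yields versality.
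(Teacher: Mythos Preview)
Your inductive scheme via the asymmetric recursion $L_{\de-1}R_{\ep+1}=P_{\de,\ep-1}Z_\ep^{(01)}/L_{\ep-1}$ is \emph{not} the route the paper (following Arndt) takes to prove this theorem, and the gap in your version is exactly the step you flag as the ``main technical hurdle''. Already for $P_{\ep-3,\ep}$ the remainder after all divisions is not a single monomial but a sum of six terms (R.1)--(R.6), and showing that these lie in the ideal generated by the \emph{previous} $\la$'s and $\rh$'s is delicate: term (R.4), for instance, requires multiplying $\rh_{\ep-2,\ep}$ by $\si_{\ep-2}^{(20)}$ and then invoking $\la_{\ep-2,\ep-1}$ to kill a spurious summand. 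In general this bookkeeping is precisely the content of Hamm's rooted-tree combinatorics (Theorem~\ref{hammthm} and the lemmas after it), and your sentence ``every factor pair except the one designated as the new generator is redundant'' does not survive contact with the actual remainders. The new $\la_{\de,\ep}$ and $\rh_{\de,\ep}$ are not the remainders themselves; they are \emph{defined} as the substitutions $P_{\de,\ep}|_{z_\be=l_\be}$ and $P_{\de,\ep}|_{z_\be=r_\be}$ after $P_{\de,\ep}$ has been constructed, and they are used only at the \emph{next} level to guarantee that $P_{\de,\ep}$ is divisible (without remainder) by the relevant $L$'s and $R$'s.

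The paper's proof avoids this combinatorics entirely. Instead of dividing by $L_{\ep-1}$, Arndt works with the symmetric product $P_{\de,\ep-1}P_{\de+1,\ep}$ and shows, using $\la_{\de,\ep-1}$ and $\rh_{\de+1,\ep}$ from the previous level together with the congruences $L_\be P_{\de+1,\ep}\equiv L_\de P_{\be+1,\ep}$, that this product is divisible both by $L_\de$ and by $R_\ep$ modulo $I_{\de,\ep}$. The key lemma, checked on the special fibre via the toric parametrisation $z_\de=u^{n'}$, $z_\ep=v^{n'}$, is that divisibility by $L_\de$ and by $R_\ep$ separately implies divisibility by the product $L_\de R_\ep$. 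This yields the \emph{existence} of $P_{\de,\ep}$ without ever writing it down; one then adjusts by elements of $I_{\de,\ep}$ so that its restriction agrees with the known Artin-component expression. Flatness is obtained by explicitly lifting the quasi-determinantal relations. Your counting for versality is fine and matches the paper's argument via injectivity of $(J/\mathfrak m J)^*\to T^2_X$.
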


\begin{proof}[Sketch of proof]
To find $P_{\de,\ep}$ we have to express the product  $L_{\de-1}R_{\ep+1}$
in the local ring in terms of variables with indices between $\de$ and
$\ep$. We assume that we already have
the equations $L_{\be-1}R_{\ga+1}=P_{\be,\ga}$ for $\ga-\be<\ep-\de$,
and also the base equations formed from them. Let $I_{\de,\ep}$ be the
ideal of all these equation. Obviously $P_{\de,\ep}$ has to 
satisfy
\begin{equation}\label {bgde}
L_\be R_\ga P_{\de,\ep}\equiv P_{\be+1,\ep}P_{\de,\ga-1}
\bmod I_{\de,\ep}
\end{equation}
for all $\be$, $\ga$, and it can be determined from any
of these equations. The other ones then follow. For the
actual computation (following \cite{sb}) we  use $\be=\ep-1$
and $\ga=\ep$, but now we take $\be=\de$, $\ga=\ep$, so the
right hand side of equation (\ref{bgde}) becomes
$P_{\de,\ep-1}P_{\de+1,\ep}$.
We perform successively division with remainder by $L_\be$
and find
\[
P_{\de,\ep-1}=\sum_{\be=\de}^{\ep-1} P_{\de,\ep-1}^{(\be)} L_\be\;,
\]
without remainder because of the equation $\la_{\de,\ep-1}$.
Now we use the congruences
\[
L_\be P_{\de+1,\ep}\equiv L_\de P_{\be+1,\ep}\;,
\]
whose validity one sees upon multiplying with $R_{\ep+1}$.
We conclude that
\[
P_{\de,\ep-1}P_{\de+1,\ep}\equiv
          L_\de (\sum P_{\de,\ep-1}^{(\be)}P_{\be+1,\ep}) \;.
\]
Likewise, from 
\begin{equation}\label{expan}
P_{\de+1,\ep}=\sum Q_{\de+1,\ep}^{(\be)} R_\be\;,
\end{equation}
we get, using $\rh_{\de+1,\ep}$,  that
\[
P_{\de,\ep-1}P_{\de+1,\ep}\equiv
          R_\ep (\sum Q_{\de+1,\ep}^{(\be)}P_{\de,\be-1}) \;.
\]
Arndt proves that, 
if a polynomial is divisible by $L_\de$ and by $R_\ep$, then
it is divisible by the product $L_\de R_\ep$.
To check the statement it suffices to do it for the special fibre
(according to \cite[1.2.2]{ar}). One notes that
the ideal $I_{\de,\ep}$ defines a flat deformation of the product
of a certain cyclic quotient singularity in the variables $z_\de$,
\dots, $z_\ep$ with a smooth factor of the remaining coordinates,
so $z_\de=u^{n'}$, $z_\ep=v^{n'}$
for a certain $n'$. Here indeed it holds, that if 
a polynomial is divisible by $u^{n'}$ and by $v^{n'}$, then
it is divisible by the product $(uv)^{n'}$.
Therefore there exists a polynomial
$P_{\de,\ep}$ with $L_\de R_\ep P_{\de,\ep}= P_{\de,\ep-1}P_{\de+1,\ep}$.

We do not know very much about $ P_{\de,\ep}$. We know that over the
Artin component $Z_{\ep-1}^{(01)}$ is divisible by $L_{\ep-1}$, so
we can define inductively $P_{\de,\ep}|_\text{AC}
= P_{\de,\ep-1}|_\text{AC}Z_\ep^{(01)}/L_{\ep-1}$. Restricted to the
Artin component, the difference between the so defined
$P_{\de,\ep}|_\text{AC}$ and $ P_{\de,\ep}$ from above lies in the 
restriction of the ideal $I_{\de,\ep}$. By induction the elements
of this ideal extend in the correct way, so we can use them to change
$ P_{\de,\ep}$, so that its restriction is equal to 
$P_{\de,\ep}|_\text{AC}$.

To show flatness we lift the relations. On the Artin component
we have the quasi-determinantal relations, which come in two types,
depending on the use of the bottom or top line of the quasi-matrix.
We give the lift for one type, the other being similar. On the 
Artin component one has the relation
\[
L_{\ga-1}(L_{\de-1}R_{\ep+1}-P_{\de,\ep})=
L_{\de-1}(L_{\ga-1}R_{\ep+1}-P_{\ga,\ep})
+
\frac{P_{\ga,\ep}}{R_{\ga}}(L_{\de-1}R_{\ga}-P_{\de,\ga-1})\;,
\]
so using an expansion like (\ref{expan}) for $P_{\ga,\ep}$
we find modulo the ideal
$I_{\de,\ep}$ the relation
\[
L_{\ga-1}(L_{\de-1}R_{\ep+1}-P_{\de,\ep})\equiv 
L_{\de-1}(L_{\ga-1}R_{\ep+1}-P_{\ga,\ep})
+
\sum Q_{\ga,\ep}^{(\be)}(L_{\de-1}R_{\be}-P_{\de,\be-1})\;.
\]

For versality one needs firstly the surjectivity of the map of the
Zariski tangent space of our deformation to $T^1_X$, and secondly the 
injectivity of the obstruction map 
$\Ob\colon(J/\mathfrak m J)^*\to T^2_X$,
where $J$ is the ideal of the base space. That we cover all possible
infinitesimal deformations, is something we have already said and used; 
for a proof (which requires an explicit description of $T^1_X$),
see \cite{ri}, \cite{ar} or \cite{js}. We neither give here an explicit
description of $T^2_X$ (see \cite{ar}). For the map $\Ob$ one starts
with a map $l\colon J/\mathfrak mJ \to \sier X$ 
and exhibits the following
function on relations: consider a relation $\boldsymbol r$, i.e.,
$\sum f_ir_j=0$, which lifts to $\sum F_iR_i=\sum g_j q_j$, where 
the $g_j$ are the generators of the ideal $J$. 
Then $\Ob(l)(\boldsymbol r)=
\sum l(g_j)q_j\in \sier X$. 
From our description of the relations we see that 
the equation $\rh_{\de+1,\ep}$ occurs for the first time
when lifting the relation 
\[
L_{\de}(L_{\de-1}R_{\ep+1}-P_{\de,\ep})=
L_{\de-1}(L_{\de}R_{\ep+1}-P_{\de+1,\ep})
+
\frac{P_{\de+1,\ep}}{R_{\de+1}}(L_{\de-1}R_{\de+1}-P_{\de,\de})\;.
\]
This more or less shows that one really needs all equations for the base
space.
\end{proof} 

Note that this result indeed determines the ideal of the base space,
but does not give explicit formulas for specific generators. Looking
at the equations, say for the total space, one might recognise
the numbers $[1,2,1]$ and $[2,1,2]$, suggesting that an explicit
formula can be somehow given in terms of Catalan combinatorics.
To show that the situation is more complicated, we will derive the
equations of the next line.

We compute the polynomial $P_{\ep-3,\ep}$. It will be more complicated
than formula \eqref{mindrie}, so better notation is desirable to
increase readability.
Following Brohme \cite{sb} we use a position system. In stead of the
complicated symbols $Z_{\ep-\ga}^{(ij)}$ and $\si_{\ep-\ga}^{(ij)}$
we write only the upper index $ij$; the lower index
is not needed, if we write factors
with the same $\ep-\ga$ below each other. To distinguish between 
$Z_{\ep-\ga}^{(ij)}$ and $\si_{\ep-\ga}^{(ij)}$ we write the $ij$ 
representing $Z_{\ep-\ga}^{(ij)}$  in
bold face. 
\def\bb#1#2{\textbf{\large #1#2}}
\def\bi#1#2{\textbf{\textit{\large #1#2}}}
\def\nr#1#2{({\mathrm #1}.#2)}
\begin{example}
The symbol 
\[
\begin{matrix}
\bb 30 &    \bb 20  & \bb 10  &  \bb 03\\
&20&11 \\
&11&12
\end{matrix}
\]
represents the monomial 
$
Z_{\ep-3}^{(30)}\si_{\ep-2}^{(11)}\si_{\ep-2}^{(20)}Z_{\ep-2}^{(20)}
\si_{\ep-1}^{(12)}\si_{\ep-1}^{(11)}Z_{\ep-1}^{(10)}Z_\ep^{(03)}
$.
\end{example}
A factor $L_{\ep-\ga}$ in the denominator will be represented by
$\overline L$, whereas an extra factor  $L_{\ep-\ga}$ in the numerator
will be written in bold face. We start from $P_{\ep-3,\ep-1}Z_\ep^{(01)}
/L_{\ep-1}$, being the sum of two terms. These will be transformed
using the division with remainder \eqref{links} and \eqref{rechts}
and previous equations. One gets some terms, which occur in the final answer,
and some terms, which will be transformed again. Terms that will be 
transformed, are written in italics, and should be considered erased,
when transformed. So $P_{\ep-3,\ep-1}Z_\ep^{(01)}
/L_{\ep-1}$ is, modulo previous equations, equal to the sum of the 
not italicised terms up to a line in italics, plus the terms in that line
(disregarding all text in between). The final result consists of a 
polynomial of 8 terms,  (P.1) -- (P.8), and two types
of terms with $L_{\ep-1}$ in the denominator, called (L.1) -- (L.4)
and (R.1) -- (R.4). Now we start:
\[
\begin{matrix}
\nr I1 &
\begin{matrix}
\bi 10 & \bi 11 & \bi 01 & \bi 01\\
&&\overline L
\end{matrix}&
\qquad+\qquad&\nr I2&
\begin{matrix}
\bi 20 &   \bi 10&  \bi 02 &  \bi 01\\
&\textit{11}& \overline L
\end{matrix}
\\[7mm]
\nr P1 &
\begin{matrix}
\bb 10 & \bb 11 & \bb 11 & \bb 01
\end{matrix}
&+&\nr P2 &
\begin{matrix}
\bb 20 &   \bb 10&  \bb 12 &  \bb 01\\
&11 
\end{matrix}
\\[7mm]
\nr I3&
\begin{matrix}
\bi 10 & \bi 11 &  & \bi 01\\
&&\textit{11}\\
&&\overline L
\end{matrix}&+&\nr I4 &
\begin{matrix}
\bi 20 &   \bi 10&   &  \bi 01\\
&\textit{11}& \textit{12} \\
&&\overline L
\end{matrix}
\end{matrix}
\]
Now take out factors $L_{\ep-2}$ and $R_\ep$, giving
$L_{\ep-1}Z_{\ep-1}^{(10)}$ and two remainders:
\[
\begin{matrix}
\nr P3 &
\begin{matrix}
\bb 10 & \bb 21 & \bb 10 & \bb 02\\
&&11 
\end{matrix}&\qquad+\qquad&\nr P4&
\begin{matrix}
\bb 20 &   \bb 20&  \bb 10 &  \bb 02\\
&11&12 
\end{matrix}
\\[7mm]
\nr I5&
\begin{matrix}
\bi 10 &  &  & \bi 01\\
&\textit{21}&\textit{11}\\
&&\overline L
\end{matrix}&+&\nr I6&
\begin{matrix}
\bi 20 &   &   &  \bi 01\\
&\textit{20}& \textit{12} \\
&\textit{11}&\overline L
\end{matrix}
\\[7mm]
\nr R1 &
\begin{matrix}
&\bb L{} \\
\bb 10 & \bb 21\\
&&11&02\\
&&\overline L
\end{matrix}
&+&\nr R2&
\begin{matrix}
&\bb L{} \\
\bb 20 &   \bb 20\\
&11& 12&02 \\
&&\overline L
\end{matrix}
\end{matrix}
\]
Taking out $L_{\ep-3}$ and $R_\ep$ from (I.5) and (I.6) gives
$Z_{\ep-2}^{(10)}Z_{\ep-1}^{(01)}$ and two remainders:
\[
\begin{matrix}
\nr I7 &
\begin{matrix}
\bi 20 & \bi 10  & \bi 01 & \bi 02\\
&\textit{21}&\textit{11}\\
&&\overline L
\end{matrix}
&\qquad+\qquad
&\nr I8&
\begin{matrix}
\bi 30 &    \bi 10  & \bi 01  &  \bi 02\\
&\textit{20}& \textit{12} \\
&\textit{11}&\overline L
\end{matrix}
\\[7mm]
\nr L1&
\begin{matrix}
 &  &  & \bb 01\\
20&21&11\\
&&\overline L
\end{matrix}&\qquad+\qquad&\nr L2&
\begin{matrix}
 &   &   &  \bb 01\\
30&20&12 \\
&11&\overline L
\end{matrix}
\\[7mm]
\nr R3&
\begin{matrix}
\bb L{}\\
\bb20\\
&21&11&02\\
&&\overline L
\end{matrix}&\qquad+\qquad&\nr R4&
\begin{matrix}
\bb L{}\\
\bb 30 \\
&20 &12&02 \\
&11&\overline L
\end{matrix}
\end{matrix}
\]
Now we follow the steps of the computation of $P_{\ep-2,\ep}$.
\[
\begin{matrix}
\nr P5 &
\begin{matrix}
\bb 20 & \bb 10  & \bb 11 & \bb 02\\
&21&11
\end{matrix}&\qquad+\qquad&\nr P6 &
\begin{matrix}
\bb 30 &    \bb 10  & \bb 11  &  \bb 02\\
&20&12 \\
&11
\end{matrix}
\\[9mm]
\nr I9 &
\begin{matrix}
\bi 20 & \bi 10  &  & \bi 02\\
&\textit{21}&\textit{11}\\
&&\textit{11}\\
&&\overline L
\end{matrix}&\qquad+\qquad&\nr I{10}&
\begin{matrix}
\bi 30 &    \bi 10  &   &  \bi 02\\
&\textit{20}& \textit{11} \\
&\textit{11}&\textit{12}\\
&&\overline L
\end{matrix}
\\[9mm]
\nr P7&
\begin{matrix}
\bb 20 & \bb 20  & \bb 10 & \bb 03\\
&21&11\\
&&11
\end{matrix}&\qquad+\qquad&\nr P8&
\begin{matrix}
\bb 30 &    \bb 20  & \bb 10  &  \bb 03\\
&20&11 \\
&11&12
\end{matrix}
\\[9mm]
\nr L3 &
\begin{matrix}
\bb 20 &  &  & \bb 02\\
&20&11\\
&21&11\\
&&\overline L
\end{matrix}&\qquad+\qquad&\nr L4&
\begin{matrix}
\bb 30 &   &   &  \bb 01\\
&20&11\\
&20&12 \\
&11&\overline L
\end{matrix}
\\[9mm]
\nr R5&
\begin{matrix}
&\bb L{}\\
\bb20&\bb 20\\
&21&11&03\\
&&11\\
&&\overline L
\end{matrix}&\qquad+\qquad&\nr R6&
\begin{matrix}
&\bb L{}\\
\bb 30&\bb 20 \\
&20 &11&03 \\
&11&12\\
&&\overline L
\end{matrix}
\end{matrix}
\]

\begin{remarks}
1.\enspace
It is easy to see that the terms (L.i) vanish modulo the ideal $J$:
the terms (L.1) and (L.2) vanish because of  
the equation $\la_{\ep-3,\ep-1}$,
and the terms (L.3) and (L.4) both vanish by equation $\la_{\ep-2,\ep-1}$.

The terms (R.i) are more difficult. Taken together, (R.2) and (R.5)
vanish by equation $\rh_{\ep-2,\ep}$.  The term (R.1) vanishes by
$\rh_{\ep-1,\ep}$, as does less evidently (R.3). For (R.6) one uses
$\la_{\ep-2,\ep-1}$. The term (R.4) is the most complicated. We 
multiply $\rh_{\ep-2,\ep}$ with $\si_{\ep-2}^{(20)}$ to get
$
\begin{smallmatrix}
20&12&02\\
11
\end{smallmatrix}
+
\begin{smallmatrix}
20&11&03\\
21&11
\end{smallmatrix}
$,
in which the second summand vanishes by equation $\la_{\ep-2,\ep-1}$.
\\   
2.\enspace
The term (P.8) lies in the ideal, so one could leave it out.
However, to have a general formula it is better to keep it.
\\
3.\enspace
Arndt \cite{ar} gives a slightly different, more symmetric result (without
showing his computation). He has also 8 terms, (P.8) is missing and
(P.4) is replaced by two terms, which together are equivalent
to it,  modulo the base ideal:
\[
\begin{matrix}
\bb 20 & \bb 20 & \bb 01 & \bb 02\\
&11&12 
\end{matrix}\qquad+\qquad
\begin{matrix}
\bb 20 &   \bb 20&  \bb 02 &  \bb 02\\
&11&11 
\end{matrix}
\]
\end{remarks}

In our computation  we work systematically
from the right to the left. Once we take out a factor $L_\ga R_\ep$
and replace it by $P_{\ga+1,\ep-1}$, we  basically repeat an 
earlier computation. Brohme \cite{sb} has given an inductive formula
for the resulting terms (P.i) and the remainder terms (R.i).
The problem lies in showing that the remainder terms lie in the
ideal of the base space. This problem was solved by Martin Hamm \cite{ha}
on the basis of a more direct, combinatorial description of the
occurring terms. Each term is represented by a rooted tree,
which we draw horizontally (Hamm puts as usually the root at the
top). Accordingly we call for length of a tree, what is usually
called its height.

We consider as example the term (P.7). We first draw
the tree such that each vertex directly correspond to a position
in the symbol for (P.7) above, but then we transform it
so that the bottom line is straight. This will be the way
we draw all trees in the sequel.

\unitlength 20pt
\[
\begin{picture}(3,2)(0,-2)
\put(0,0)\cir
\put(0,0){\lijn{-1}}
\put(1,0)\cir
\put(1,0){\lijn{-1}}
\put(1,-1)\cir
\put(1,-1){\lijn{-1}}
\put(2,0)\cir
\put(2,0){\lijn{0}}
\put(2,-1)\cir
\put(2,-1){\lijn{1}}
\put(2,-2)\cir
\put(2,-2){\lijn{2}}
\put(3,0)\cir
\end{picture}
\qquad
\lra
\qquad
\begin{picture}(3,2)
\put(0,0)\cir
\put(0,0){\lijn{0}}
\put(1,0)\cir
\put(1,0){\lijn{0}}
\put(1,1)\cir
\put(1,1){\lijn{0}}
\put(2,0)\cir
\put(2,0){\lijn{0}}
\put(2,1)\cir
\put(2,1){\lijn{-1}}
\put(2,2)\cir
\put(2,2){\lijn{-2}}
\put(3,0)\cir
\end{picture}
\]
We  explain how to compute the numbers $ij$ in the symbol from the
tree, with the highest node at distance $\ga$ from the
root giving
$Z_{\ep-\ga}^{(ij)}$, and the other nodes $\si_{\ep-\ga}^{(ij)}$.
Given a tree $T$, the resulting polynomial in these variables will
be denoted by $P(T)$. We write $\la(T)$ for the corresponding
term in $\la_{\de,\ep}$, obtained by putting $z_\be=l_\be$, and
$\rh(T)$ for the term obtained with  $z_\be=r_\be$.

\begin{defn}
Let $T$ be a rooted tree. To each node $a\in T$ we associate two
numbers, $i$ and $j$, as follows. The second number $j$ is the number
of child nodes of $a$. Let $p(a)$ be the parent of $a$; if there exists
a node $b$ lying directly above $a$, let $p(b)$ be its parent.
Then the number $i$ is the number of nodes between $p(a)$ and $p(b)$
(with $p(a)$ and $p(b)$ included), or the number of nodes above
$p(a)$ (with $p(a)$  included), if there is no  node lying above $a$.
\end{defn} 
\noindent
We also represent the remainder terms (R.i) by a tree. 
Such a term has the form $L_\ga R_{\de,\ep}^{(i)}/L_{\ep-1}$. 
The tree will give $R_{\de,\ep}^{(i)}$. If a tree $T$ is given,
we write $R(T)$ for this polynomial.
As example
we consider (R.5). 
We observe that its symbol contains the same pairs of numbers as
(P.7), except that some on the top are missing, and that 
the root represents $\si_\ep^{(ij)}$ instead of $Z_\ep^{(ij)}$.
We represent it by the following tree.
\[
\begin{picture}(3,2)
\put(0,0)\cir
\put(0,0){\lijn{0}}
\put(1,0)\cir
\put(1,0){\lijn{0}}
\put(1,1)\cir
\put(1,1){\lijn{0}}
\put(2,0)\cir
\put(2,0){\lijn{0}}
\put(2,1)\cir
\put(2,1){\lijn{-1}}
\put(2,2)\cio
\put(2,2){\lijn{-2}}
\put(3,0)\cio
\end{picture}
\]
To the open nodes (except the root) we do not attach numbers $ij$, but
these nodes do contribute to the numbers $ij$ for the other nodes.

\begin{example}
For $P_{\ep-3,\ep}$ we find the following trees
\begin{gather*}
\begin{picture}(3,1)
\put(0,0)\cir
\put(0,0){\lijn{0}}
\put(1,0)\cir
\put(1,0){\lijn{0}}
\put(2,0)\cir
\put(2,0){\lijn{0}}
\put(3,0)\cir
\end{picture}
\qquad
\begin{picture}(3,1)
\put(0,0)\cir
\put(0,0){\lijn{0}}
\put(1,0)\cir
\put(1,0){\lijn{0}}
\put(1,1)\cir
\put(1,1){\lijn{-1}}
\put(2,0)\cir
\put(2,0){\lijn{0}}
\put(3,0)\cir
\end{picture}
\qquad
\begin{picture}(3,1)
\put(0,0)\cir
\put(0,0){\lijn{0}}
\put(1,0)\cir
\put(1,0){\lijn{0}}
\put(2,0)\cir
\put(2,0){\lijn{0}}
\put(2,1)\cir
\put(2,1){\lijn{-1}}
\put(3,0)\cir
\end{picture}
\qquad
\begin{picture}(3,1)
\put(0,0)\cir
\put(0,0){\lijn{0}}
\put(1,0)\cir
\put(1,0){\lijn{0}}
\put(1,1)\cir
\put(1,1){\lijn{-1}}
\put(2,0)\cir
\put(2,0){\lijn{0}}
\put(2,1)\cir
\put(2,1){\lijn{-1}}
\put(3,0)\cir
\end{picture}
\\[3mm]
\begin{picture}(3,2)
\put(0,0)\cir
\put(0,0){\lijn{0}}
\put(1,0)\cir
\put(1,0){\lijn{0}}
\put(1,1)\cir
\put(1,1){\lijn{0}}
\put(2,0)\cir
\put(2,0){\lijn{0}}
\put(2,1)\cir
\put(2,1){\lijn{-1}}
\put(3,0)\cir
\end{picture}
\qquad
\begin{picture}(3,2)
\put(0,0)\cir
\put(0,0){\lijn{0}}
\put(1,0)\cir
\put(1,0){\lijn{0}}
\put(1,1)\cir
\put(1,1){\lijn{-1}}
\put(1,2)\cir
\put(1,2){\lijn{-1}}
\put(2,0)\cir
\put(2,0){\lijn{0}}
\put(2,1)\cir
\put(2,1){\lijn{-1}}
\put(3,0)\cir
\end{picture}
\qquad
\begin{picture}(3,2)
\put(0,0)\cir
\put(0,0){\lijn{0}}
\put(1,0)\cir
\put(1,0){\lijn{0}}
\put(1,1)\cir
\put(1,1){\lijn{0}}
\put(2,0)\cir
\put(2,0){\lijn{0}}
\put(2,1)\cir
\put(2,1){\lijn{-1}}
\put(2,2)\cir
\put(2,2){\lijn{-2}}
\put(3,0)\cir
\end{picture}
\qquad
\begin{picture}(3,2)
\put(0,0)\cir
\put(0,0){\lijn{0}}
\put(1,0)\cir
\put(1,0){\lijn{0}}
\put(1,1)\cir
\put(1,1){\lijn{-1}}
\put(1,2)\cir
\put(1,2){\lijn{-1}}
\put(2,0)\cir
\put(2,0){\lijn{0}}
\put(2,1)\cir
\put(2,1){\lijn{-1}}
\put(2,2)\cir
\put(2,2){\lijn{-2}}
\put(3,0)\cir
\end{picture}
\end{gather*}
and we have the following trees for the remainder.
\begin{gather*}
\begin{picture}(3,1)
\end{picture}
\qquad
\begin{picture}(3,1)
\end{picture}
\qquad
\begin{picture}(3,1)
\put(0,0)\cir
\put(0,0){\lijn{0}}
\put(1,0)\cir
\put(1,0){\lijn{0}}
\put(2,0)\cir
\put(2,0){\lijn{0}}
\put(2,1)\cio
\put(2,1){\lijn{-1}}
\put(3,0)\cio
\end{picture}
\qquad
\begin{picture}(3,1)
\put(0,0)\cir
\put(0,0){\lijn{0}}
\put(1,0)\cir
\put(1,0){\lijn{0}}
\put(1,1)\cir
\put(1,1){\lijn{-1}}
\put(2,0)\cir
\put(2,0){\lijn{0}}
\put(2,1)\cio
\put(2,1){\lijn{-1}}
\put(3,0)\cio
\end{picture}
\\[3mm]
\begin{picture}(3,2)
\put(0,0)\cir
\put(0,0){\lijn{0}}
\put(1,0)\cir
\put(1,0){\lijn{0}}
\put(1,1)\cio
\put(1,1){\lijn{0}}
\put(2,0)\cir
\put(2,0){\lijn{0}}
\put(2,1)\cio
\put(2,1){\lijn{-1}}
\put(3,0)\cio
\end{picture}
\qquad
\begin{picture}(3,2)
\put(0,0)\cir
\put(0,0){\lijn{0}}
\put(1,0)\cir
\put(1,0){\lijn{0}}
\put(1,1)\cir
\put(1,1){\lijn{-1}}
\put(1,2)\cio
\put(1,2){\lijn{-1}}
\put(2,0)\cir
\put(2,0){\lijn{0}}
\put(2,1)\cio
\put(2,1){\lijn{-1}}
\put(3,0)\cio
\end{picture}
\qquad
\begin{picture}(3,2)
\put(0,0)\cir
\put(0,0){\lijn{0}}
\put(1,0)\cir
\put(1,0){\lijn{0}}
\put(1,1)\cir
\put(1,1){\lijn{0}}
\put(2,0)\cir
\put(2,0){\lijn{0}}
\put(2,1)\cir
\put(2,1){\lijn{-1}}
\put(2,2)\cio
\put(2,2){\lijn{-2}}
\put(3,0)\cio
\end{picture}
\qquad
\begin{picture}(3,2)
\put(0,0)\cir
\put(0,0){\lijn{0}}
\put(1,0)\cir
\put(1,0){\lijn{0}}
\put(1,1)\cir
\put(1,1){\lijn{-1}}
\put(1,2)\cir
\put(1,2){\lijn{-1}}
\put(2,0)\cir
\put(2,0){\lijn{0}}
\put(2,1)\cir
\put(2,1){\lijn{-1}}
\put(2,2)\cio
\put(2,2){\lijn{-2}}
\put(3,0)\cio
\end{picture}
\end{gather*}
\end{example}

We now characterise the tree representing terms $P(T)$ in the
polynomial $P_{\de,\ep}$, which is obtained as above from
$P_{\de,\ep-1} Z_\ep^{(01)}/L_{\ep-1}$, by working systematically
from the right to the left.
Let $a\in T$ be a node in a rooted tree, 
then $T(a)$ is the (maximal) subtree with 
$a$ as root.

\begin{defn}
An $\al$-tree is a rooted tree satisfying the following property:
if two nodes $a$ and $b$ have the same parent, and if $b$ lies above
$a$, then the subtree $T(b)$ is shorter than $T(a)$.
By $\cal A(k)$ we denote the set of all $\al$-trees of length $k$.
\end{defn}

\begin{theorem}\label{hammthm}
The polynomial $P_{\de,\ep}$ is given by
\[
P_{\de,\ep}=
\sum_{T\in \cal A (\ep-\de+1)} P(T)\;.
\]
\end{theorem}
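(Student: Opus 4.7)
The plan is to induct on $n = \ep - \de + 1$, exploiting the recursive construction of $P_{\de,\ep}$ already used in the proof of Theorem~\ref{arndtthm}: modulo $J$ one has $P_{\de,\ep} \equiv P_{\de,\ep-1} Z_\ep^{(01)}/L_{\ep-1}$, and this fraction is processed by the systematic right-to-left reduction carried out for $P_{\ep-3,\ep}$ above, namely repeated division with remainder via \eqref{links} and \eqref{rechts} combined with substitutions $L_\ga R_\ep \mapsto P_{\ga+1,\ep-1}$ whenever such a product can be isolated. The base case $n = 1$ is immediate, as the only $\al$-tree of length one is a single node representing $Z_\ep^{(00)} = P_{\ep,\ep}$.

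For the combinatorial heart of the induction I would describe a surjection $\Phi\colon \cal A(n) \to \cal A(n-1)$ obtained by deleting the topmost node of the rightmost column (above position $\ep$) of $T$; the $\al$-property survives deletion because that node already supported the shortest subtree among its siblings. I would then check that applying the right-to-left reduction to $P(T') Z_\ep^{(01)}/L_{\ep-1}$ for a fixed $T' \in \cal A(n-1)$ produces exactly $\sum_{\Phi(T) = T'} P(T)$ plus remainder terms. The labels $i,j$ assigned by Hamm's definition correspond naturally to the reduction: the ``$j$'' at a node counts the number of successive divisions \eqref{links}/\eqref{rechts} performed in its column, and the ``$i$'' records the horizontal span to the substitution $L_\ga R_\ep \mapsto P_{\ga+1,\ep-1}$ that produced the node. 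Writing the reduction in Brohme's position notation (as in the example) makes this bookkeeping a direct verification.

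The main obstacle is to show that every remainder term---a term in which $L_{\ep-1}$ survives in the denominator after the reduction halts---lies in $J$. Each such term is encoded by an open tree (with top nodes drawn $\cio$), and by a secondary induction on its open part I would match it to a specific generator of $J$, either $\la_{\al,\be}$, $\rh_{\al,\be}$, or $(l_\ep - r_\ep)\si_\ep^{(11)}$, so that the remainder becomes a polynomial multiple of that generator modulo previously established equations. The delicate point is that, as with term (R.4) in the example $P_{\ep-3,\ep}$, the required generator sometimes only appears after multiplication by an auxiliary $\si$-factor together with cancellation of an extra summand using a $\la$- or $\rh$-equation of smaller span; regimenting these ad hoc cancellations into a systematic inductive lemma indexed by the shape of the open tree is the hardest step. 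Once the remainders are absorbed, the uniqueness of $P_{\de,\ep}$ modulo $J$ from Theorem~\ref{arndtthm} forces $\sum_{T \in \cal A(n)} P(T)$ to equal $P_{\de,\ep}$.
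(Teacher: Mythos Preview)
Your overall plan—induction on $\ep-\de$, splitting into polynomial terms and remainder terms—is the paper's plan, but both halves of your inductive step have gaps.

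For the polynomial terms, your map $\Phi$ is not the right one. Under either reading of ``topmost node of the rightmost column'', deletion does not send $\cal A(n)$ to $\cal A(n-1)$: if you remove the highest child of the root (with its subtree) you leave the length unchanged, since length is governed by the \emph{lowest} subtree; if you remove the unique leaf at maximal distance, the resulting tree need not be an $\al$-tree (two siblings can end up with subtrees of equal length). The decomposition the reduction actually produces is different: an $\al$-tree $T$ of length $n$ splits into a root, its lowest child's subtree $T_1\in\cal A(n-1)$, and the complement $T_2$, an arbitrary $\al$-tree of length at most $n-1$. In the computation $T_1$ is the tree coming from $P_{\de,\ep-1}$, and $T_2$ arises when a factor $L_\ga R_\ep$ is replaced by $P_{\ga+1,\ep-1}$. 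So the correct surjection is $T\mapsto T_1$, with fibre parametrised by the possible $T_2$.

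For the remainder terms you correctly locate the difficulty but not the mechanism that resolves it. A direct secondary induction on the open part of an $\al\ga$-tree, matching each remainder to a single generator of $J$, is exactly what fails: your own example (R.4) shows that the required $\rh$-equation only appears after adding an auxiliary term that is \emph{not} an $\al\ga$-tree remainder at all. The paper's (Hamm's) device is indirect. One enlarges the class of remainder trees to the $\ga$-trees and introduces a pruning operation $G(T)$ that deletes the highest child at every highest node. Two facts are then proved separately: the sum of $R(T)$ over \emph{all} $\ga$-trees lies in $J$, because the operation $G$ groups them into multiples of $\rh$-equations (plus some $\la$-corrections); and the sum over those $\ga$-trees which are not $\al$-trees also lies in $J$, because any such tree contains a sub-$\al$-tree whose contribution is visibly a $\la$-term. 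Subtracting, the genuine remainder over the $\al\ga$-trees lies in $J$. This detour through a larger class of trees is the missing idea; without it your ``regimenting the ad hoc cancellations'' step has no systematic formulation.
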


This Theorem claims two things: that the $\al$-trees give 
exactly the polynomial terms in the computation, and secondly that
this is really the polynomial $P_{\de,\ep}$ we are after. To show the 
second part we have to prove that the remainder terms lie in the
ideal generated by the equations of the base space. 
As we have seen with (R.4) above, the use of
the equations leads to terms, which do not occur in the computation
itself.
We have to characterise
the corresponding trees. This leads to the concept of $\ga$-tree
(Hamm has also $\be$-trees \cite{ha}). We postpone the definition,
and first consider a sub-class of the $\al$-trees.

\begin{defn}
An $\al\ga$-tree is an $\al$-tree, whose root has at least two child nodes
and the subtree of the highest child of the root is unbranched (this
is the chain of open dots in our pictures). Let $\cal{AC}(k,l)$ be
the set of all $\al\ga$-trees of length $k$, such that the unbranched
subtree (with the root included) has length $l$. 
\end{defn}

\begin{lemma}
Modulo the equations $\la_{\ga,\ep-1}$ one has
\[
P_{\de,\ep-1}Z_\ep^{(01)}/L_{\ep-1} =
\sum_{T\in \cal A (\ep-\de)} P(T)
+ \sum_\ga \sum_{T\in \cal {AC} (\ep-\de,\ep-\ga)} L_\ga R(T)/L_{\ep-1} \:.
\]
\end{lemma}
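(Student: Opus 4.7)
The strategy is induction on $n=\ep-\de$. I would assume Theorem~\ref{hammthm} for the smaller case, so that $P_{\de,\ep-1}=\sum_{T'\in\mathcal A(\ep-\de)}P(T')$, and then analyse $P(T')Z_\ep^{(01)}/L_{\ep-1}$ term by term using the division-with-remainder relations \eqref{links} and \eqref{rechts}, invoking the equations $\la_{\ga,\ep-1}$ only to cancel ``parasitic'' products of remainders. The base case $n=2$ is immediate: starting from $P_{\ep-2,\ep-1}Z_\ep^{(01)}/L_{\ep-1}=Z_{\ep-2}^{(10)}Z_{\ep-1}^{(01)}Z_\ep^{(01)}/L_{\ep-1}$ and applying \eqref{links} once to $Z_{\ep-1}^{(01)}$ yields exactly one main contribution (matching the single tree in $\mathcal A(2)$) and one surviving remainder (matching the single tree in $\mathcal{AC}(2,2)$), with no $\la$-equation needed.

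For the inductive step I formalise, in general, the right-to-left division algorithm already illustrated by the paper for $P_{\ep-3,\ep}$. For each $T'\in\mathcal A(\ep-\de)$, I first apply \eqref{links} to the $Z_{\ep-1}^{(ij)}$-factor at the root of $T'$, writing $Z_{\ep-1}^{(ij)}=L_{\ep-1}Z_{\ep-1}^{(i+1,j)}+\si_{\ep-1}^{(i+1,j)}$. The first summand cancels the denominator $L_{\ep-1}$ and contributes a polynomial term; the second leaves a remainder of the shape $(\ldots)\si_{\ep-1}^{(i+1,j)}Z_\ep^{(01)}/L_{\ep-1}$. On this remainder I iteratively peel $L_\ga$-factors off the $Z_\ga^{(k,0)}$-entries, moving leftwards through $\ga=\ep-2,\ep-3,\dots,\de$, using at each step the lower-level identities of the pyramid (e.g.\ $L_\ga R_\ep\equiv L_{\ep-1}Z_{\ep-1}^{(10)}$ and their extensions) to reassemble the freed $L_\ga$ with the still-present $R_\ep$ into a valid polynomial factor. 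Each such peel yields a secondary $\si_\ga^{(k+1,0)}$-remainder sitting next to the earlier $\si_{\ep-1}^{(i+1,j)}$; the product of these two $\si$'s is exactly $\la_{\ga,\ep-1}$ and hence vanishes modulo the stated ideal. The algorithm therefore terminates for each $T'$ and each choice of how far the peeling is carried out: either the $1/L_{\ep-1}$ is fully eliminated, yielding a polynomial term, or peeling stops at some index $\ga$, yielding a surviving remainder of the form $L_\ga R(T)/L_{\ep-1}$.

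The central task is combinatorial: to verify that the surviving main contributions correspond bijectively to $\mathcal A(\ep-\de)$ and the surviving remainders to $\bigsqcup_\ga\mathcal{AC}(\ep-\de,\ep-\ga)$. On the polynomial side, each newly extracted $\si$ attaches to the trunk of $T'$ as an additional above-node; the $(i,j)$-labels produced by the successive divisions agree with Hamm's defining recipe for $P(T)$, since $j$ counts the number of divisions $Z\to L\cdot Z'+\si$ applied at a given trunk position (i.e.\ the number of children acquired there) and $i$ records precisely the vertical gap to the above-neighbour of the parent. On the remainder side, the chain of $\si$'s accumulated during peeling is unbranched (because we always peel from the right, any secondary branching being killed by $\la_{\ga,\ep-1}$), and the detached $L_\ga$ sits at the leftmost position reached, which is exactly the defining data of an $\al\ga$-tree in $\mathcal{AC}(\ep-\de,\ep-\ga)$ with unbranched part of length $\ep-\ga$. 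The main obstacle is proving this correspondence rigorously: showing that the map from $(T',\text{peeling history})$ to the output tree is a bijection onto the claimed index set, with no double-counting and no missing terms, and that the $(i,j)$-label evolution in the algorithm matches the recipe of $P(T)$ for the resulting tree. Once these two matchings are established, the lemma follows by equating the corresponding monomials.
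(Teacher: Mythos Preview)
Your outline has the right flavour---working right-to-left with divisions \eqref{links} and \eqref{rechts}---but several of the concrete claims are wrong, and the recursive mechanism that actually produces all $\al$-trees is missing.

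First, the base case $n=2$ is miscounted. One has $\#\mathcal A(2)=2$, not $1$: the computation \eqref{mindrie} (shifted) of $Z_{\ep-2}^{(10)}Z_{\ep-1}^{(01)}Z_\ep^{(01)}/L_{\ep-1}$ produces \emph{two} polynomial terms (the chain and the tree with one extra node above the middle). Moreover a $\la$-equation \emph{is} needed already here: the secondary remainder $\si_{\ep-2}^{(20)}\si_{\ep-1}^{(11)}Z_\ep^{(01)}/L_{\ep-1}$ is exactly what $\la_{\ep-2,\ep-1}$ kills.

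Second, and more seriously, your statement that ``the product of these two $\si$'s is exactly $\la_{\ga,\ep-1}$'' is false except when $\ep-\ga=2$. By definition $\la_{\ga,\ep-1}=P_{\ga,\ep-1}|_{z_\be=l_\be}$ is a \emph{sum} over all $\al$-trees of the appropriate length; for instance $\la_{\ep-3,\ep-1}$ has two terms, corresponding to the two trees in $\mathcal A(2)$. So the single product you obtain from one peel does not vanish by itself; one must collect several such terms (across different starting trees $T'$) before a $\la$-equation can be applied. This is why the paper does the peeling ``simultaneously in all monomials of $P_{\de,\ep-1}$''.

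Third, your algorithm is a single left-to-right pass, but the actual process is genuinely recursive. After extracting $L_\ga$ on the left and $R_\ep$ from $Z_\ep^{(0k)}$ on the right (the latter step you omit), one replaces the product $L_\ga R_\ep$ by $P_{\ga+1,\ep-1}$. For $\ga<\ep-2$ this is again a sum over $\al$-trees and does \emph{not} contain an explicit factor $L_{\ep-1}$; one must then repeat the entire computation of $P_{\ga+1,\ep}$ (with the root index of $Z_\ep$ shifted) on this new sum. It is precisely this recursion that builds the complement of the lowest subtree as an arbitrary $\al$-tree of length $\leq \ep-\de-1$, and hence realises the bijection with $\mathcal A(\ep-\de)$. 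Without it you only reach trees of a very restricted shape.
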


\begin{proof}
We compute as for $P_{\ep-3,\ep}$.
We first consider the rest terms $R(T)$. Such a term comes about from
writing $Z_\ep^{(0,k)}= Z_\ep^{(0,k+1)}R_\ep +\si_\ep^{(0,k+1)}$. We replace
$L_\ga R_\ep$ by $P_{\ga+1,\ep-1}$. The first term of  $P_{\ga+1,\ep-1}$ is
given by an unbranched tree, it is \bb 10 \bb 11 $\cdots$ \bb 11 \bb 01, 
and writing $Z_{\ep-1}^{(01)}= L_{\ep-1}Z_{\ep-1}^{(11)}
+\si_{\ep-1}^{(11)}$ leads to a term $P(T)$ with the same tree as $R(T)$
(with the only difference that all nodes are denoted by black dots).

We are left to show that the polynomial terms are exactly those represented
by $\al$-trees. This is done by induction. 
We can construct an $\al$-tree of length $\ep-\de$ by taking 
a root, an
$\al$-tree of length $\ep-\de-1$ as lowest subtree, and as its complement
an arbitrary $\al$-tree of length at most $\ep-\de-1$ (conversely,
given an  $\al$-tree of length $\ep-\de$, the lowest subtree starting
from the root, but not including it, is $\al$-tree of length $\ep-\de-1$,
while its complement has length at most   $\ep-\de-1$). Doing this in all
possible ways gives all $\al$-trees of length $\ep-\de$.
All these trees occur by our construction: in all monomials of
$P_{\de,\ep-1}Z_\ep^{(01)}/L_{\ep-1}$ we simultaneously take out factors 
$L_\ga$, until we finally are left with 
$\la_{\de,\ep-1}Z_\ep^{(01)}/L_{\ep-1}$. Each  $L_\ga R_\ep$ 
is replaced by $P_{\ga+1,\ep-1}$, and here we repeat the same 
computation as for $P_{\ga+1,\ep}$, except that the upper index of
$Z_\ep^{(0k)}$ is different. This means that we place all possible
trees of length at most $\ep-\ga-1$ above the given tree.
\end{proof}

\begin{remark}
The above proof gives an inductive formula for the number of $\al$-trees
of length $k$:
\[
\# \cal A(k)= \#\cal A(k-1)\cdot \sum_{i=0}^{k-1}\#\cal A(i)\;.
\]
One has $\#\cal A(0)=1$, $\#\cal A(1)=1$, $\#\cal A(2)=2$,
$\#\cal A(3)=8$ and $\#\cal A(4)=96$.
As we have seen in the example, some of the terms lie in the ideal $J$,
generated by the equations of the base space. For $k=4$ already 
$55$ of the $96$ terms lie in $J$, leaving ``only'' 41 terms.
Still this number is considerably larger than the relevant 
Catalan number ($14$ in this case).
\end{remark}

As already mentioned, the use of $\rh$-equations brings us outside 
the realm of $\al$-trees. We retain some properties, which are automatically
satisfied for $\al$-trees. The definition becomes rather involved.

\begin{defn}
A $\ga$-tree of length $k$ is a rooted tree 
satisfying the following properties:\\
(i)
there is only one node at distance $k$ from the root, and it lies on the 
bottom line,\\
(ii) the number of child nodes of a node at distance $d$ from the
root is at most $k-d$,\\
(iii) a node $a$ has a child node, if there exists a node $b$ lying above
$a$ with the same parent,\\
(iv) the root has at least two child nodes
and the subtree of the highest child of the root is unbranched.\\
By $\cal G(k,l)$ we denote the set of all $\ga$-trees of length $k$,
such that the unbranched
subtree (with the root included) has length $l$.
\end{defn}

\begin{example}
We consider the term (R.4) above. The sum of the following two terms
\[
\begin{picture}(3,2)
\put(0,0)\cir
\put(0,0){\lijn{0}}
\put(1,0)\cir
\put(1,0){\lijn{0}}
\put(1,1)\cir
\put(1,1){\lijn{-1}}
\put(1,2)\cio
\put(1,2){\lijn{-1}}
\put(2,0)\cir
\put(2,0){\lijn{0}}
\put(2,1)\cio
\put(2,1){\lijn{-1}}
\put(3,0)\cio
\end{picture}
\qquad + \qquad
\begin{picture}(3,2)
\put(0,0)\cir
\put(0,0){\lijn{0}}
\put(1,0)\cir
\put(1,0){\lijn{0}}
\put(1,1)\cir
\put(1,1){\lijn{0}}
\put(1,2)\cio
\put(1,2){\lijn{0}}
\put(2,0)\cir
\put(2,0){\lijn{0}}
\put(2,1)\cir
\put(2,1){\lijn{-1}}
\put(2,2)\cio
\put(2,2){\lijn{-2}}
\put(3,0)\cio
\end{picture}
\]
is a multiple of the equation $\rh_{\ep-2,\ep}$, and the second graph is
not an $\al$-tree.
\end{example}

We have to show that the sum of all remainder terms (i.e., the
sum of the $R(T)$ over all $\al\ga$-trees) lies in the ideal $J$
generated by the base equations. We do this by showing that the sum of
$R(T)$ over all $\ga$-trees lies in the ideal, as does the sum over all
$\ga$-trees which are not $\al$-trees.

\begin{lemma}\label{laeq}
The sum
$\sum_{T\in \cal{G}(\ep-\de,\ep-\ga) \setminus 
\cal{AG}((\ep-\de,\ep-\ga)} R(T)$
lies in the ideal generated by the $\la$-equations.
\end{lemma}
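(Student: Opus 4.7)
The plan is to realize the sum $\sum_{T\in\cal G\setminus\cal{AC}} R(T)$ as an explicit combination, with coefficients in the deformation ring, of the $\la$-equations. I would organize the sum by grouping the trees according to a canonical violation site of the $\al$-condition.

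First, I would locate the violation. Since $T\in\cal G$ is not an $\al$-tree (nor an $\al\ga$-tree), there exists a node $c$ with two children $a,b$ such that $b$ lies directly above $a$ and $\text{length}(T(b))\ge\text{length}(T(a))$. Among all such sibling pairs in $T$, I fix a canonical one — say the one whose parent $c$ is nearest to the root, breaking ties rightwards. This partitions $\cal G\setminus\cal{AC}$ by the data of the triple $(c,a,b)$, the subtree $T(a)$, and all of $T$ outside the subtree $T(b)$.

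Second, with a label fixed, I would let $T(b)$ range over its admissible shapes. By Definition~\ref{deflr}, the $(i,j)$-indices attached to nodes outside $T(b)$ depend only on the \emph{length} of $T(b)$, not on its internal shape, so $R(T)$ factors as an ambient polynomial (common to the group) multiplied by $\si^{(*,*)}\cdot\sum_{\widetilde T}P(\widetilde T)$, where $\widetilde T$ ranges over $\al$-trees of the prescribed length. By Theorem~\ref{hammthm} the inner sum equals some $P_{\de',\ep'}$; combining with the adjacent $\si^{(*,*)}$ factor via the substitution relations \eqref{linrel} and the substitution $z_\be=l_\be$ that is already encoded in the surrounding $\si$-factors, the group sum collapses to the ambient polynomial times $\la_{\de',\ep'}$. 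Summing over all labels exhibits $\sum_{T\in\cal G\setminus\cal{AC}} R(T)$ as a base-ring combination of $\la$-equations.

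The main obstacle is the index bookkeeping in the second step: I must verify that as $T(b)$ varies the $(i,j)$-indices at its internal nodes exactly reproduce those prescribed by Theorem~\ref{hammthm} for $P_{\de',\ep'}$, so that the sweep is neither over- nor undercounted, and that the canonical violation really stays canonical (so the partition is well-defined). Boundary cases where the marked pair involves the root, invoking axiom (iv) of the $\ga$-tree definition, need separate care, as do configurations where shifting $T(b)$ would accidentally turn $T$ into an $\al\ga$-tree; these configurations must be excluded and then tracked to see that their omission is consistent with the $\la$-equation on the right.
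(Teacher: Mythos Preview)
Your grouping varies the wrong subtree. In the paper's argument one chooses the violation site so that $T(a)$ is an $\al$-tree with a sibling $b$ directly above it whose subtree $T(b)$ is at least as long, and then one varies $T(a)$ over all $\al$-trees of its length while keeping the rest of $T$ fixed. The point is that, because $T(b)$ covers $T(a)$ from above, every node of $T(a)$ has something above it in $T$; hence in $R(T)$ every node of $T(a)$ contributes a $\si$-factor, and the highest nodes of $T(a)$ pick up exactly one extra unit in the $i$-index compared with $P(T(a))$. That shift is precisely the effect of substituting $z_\be=l_\be$, so $R(T\mid T(a))=\la(T(a))$ on the nose. Summing over all $\al$-trees $T(a)$ of that length then gives a multiple of $\la_{\de',\ep'}=\sum_{T'}\la(T')$ by Theorem~\ref{hammthm}. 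When several such violation sites exist one obtains a product of $\la$-equations.

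Your scheme fixes $T(a)$ and lets $T(b)$ vary. But the top nodes of $T(b)$ are (generically) top nodes of $T$, so they contribute genuine $Z$-factors; the partial product $R(T\mid T(b))$ is then a polynomial in the $z_\be$, not a $\la$-term. Summing over the shapes of $T(b)$ produces at best something resembling $P_{\de',\ep'}$ with shifted indices, which is \emph{not} an element of the base ideal, and no combination of the relations \eqref{linrel} with an adjacent $\si$-factor will substitute $z_\be=l_\be$ for you. Two further issues: you never impose that $T(a)$ be an $\al$-tree, which is exactly what the paper uses to invoke Theorem~\ref{hammthm}; and nothing in the $\ga$-tree axioms forces $T(b)$ itself to be an $\al$-tree, so your claim that ``$\widetilde T$ ranges over $\al$-trees'' is unsupported. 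The fix is to swap the roles: hold $T(b)$ and the ambient tree fixed, vary the $\al$-subtree $T(a)$, and read off $\la(T(a))$ directly.
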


\begin{proof}
Let $T$ be a $\ga$-tree, which is not an $\al$-tree. Then there exists
a node $a$, such that the subtree $T(a)$ is an $\al$-tree, but directly
above $a$ lies a node $b$ with the same parent, such that the
bottom line of the subtree
$T(b)$ is at least as long as $T(a)$. Denote by $R(T|T(a))$ the monomial
obtained by only multiplying the factors of $R(T)$ corresponding to the
nodes lying in $T(a)$. We claim that $R(T|T(a))=\la(T(a))$. We have to
compute the numbers $ij$. The second number, of child nodes, is 
determined by $T(a)$ only. The number $i$ also coincides in $R(T)$
and $P(T(a))$, except when $c$ is a node without nodes above it in $T(a)$.
Then its value in $R(T)$ is one more than in $P(T(a))$, so the same as
in $\la(P(a))$, proving the claim.
Replacing $T(a)$ in $T$ by an another $\al$-tree of the same length gives a
another $\ga$-tree, which is not an $\al$-tree. So the sum of $R(T)$ over
all $\ga$-trees, differing only in the $\al$-tree with root $a$, is
a multiple of a $\la$-equation. If $T$ has several such subtrees, we 
consider all possible replacements, and get the product of $\la$-equations.  
\end{proof}

The next task is to find terms of $\rh$-equations in a given tree. For
this we introduce the operation of taking away one child node at each
highest node. This can be done for any $\ga$-tree. 

\begin{defn}
Let $T$ be a $\ga$-tree. We determine inductively 
a subtree $G(T)$ with the same root
as $T$ by the following condition: if $a_1$, \dots, $a_p$ are the nodes
in $G(T)$ at distance $d$ from the root,  then they have the same child nodes
in $G(T)$ as in $T$, except for the highest node $a_p$, where we take 
away the highest child node.
\end{defn}

\begin{example}
\begin{gather*}
\begin{picture}(3,1)
\put(0,0)\cir
\put(0,0){\lijn{0}}
\put(1,0)\cir
\put(1,0){\lijn{0}}
\put(2,0)\cir
\put(2,0){\lijn{0}}
\put(2,1)\cio
\put(2,1){\lijn{-1}}
\put(3,0)\cio
\end{picture}
\quad \to \quad
\begin{picture}(1,1)
\put(0,0)\cir
\put(0,0){\lijn{0}}
\put(1,0)\cio
\end{picture}
\qquad
\begin{picture}(3,1)
\put(0,0)\cir
\put(0,0){\lijn{0}}
\put(1,0)\cir
\put(1,0){\lijn{0}}
\put(1,1)\cir
\put(1,1){\lijn{-1}}
\put(2,0)\cir
\put(2,0){\lijn{0}}
\put(2,1)\cio
\put(2,1){\lijn{-1}}
\put(3,0)\cio
\end{picture}
\quad \to \quad
\begin{picture}(1,1)
\put(0,0)\cir
\put(0,0){\lijn{0}}
\put(1,0)\cir
\put(2,0)\cio
\put(1,0){\lijn{0}}
\end{picture}
\\[3mm]
\begin{picture}(3,2)
\put(0,0)\cir
\put(0,0){\lijn{0}}
\put(1,0)\cir
\put(1,0){\lijn{0}}
\put(1,1)\cio
\put(1,1){\lijn{0}}
\put(2,0)\cir
\put(2,0){\lijn{0}}
\put(2,1)\cio
\put(2,1){\lijn{-1}}
\put(3,0)\cio
\end{picture}
\quad \to \quad
\begin{picture}(1,2)
\put(0,0)\cir
\put(0,0){\lijn{0}}
\put(1,0)\cio
\end{picture}
\qquad
\begin{picture}(3,2)
\put(0,0)\cir
\put(0,0){\lijn{0}}
\put(1,0)\cir
\put(1,0){\lijn{0}}
\put(1,1)\cir
\put(1,1){\lijn{-1}}
\put(1,2)\cio
\put(1,2){\lijn{-1}}
\put(2,0)\cir
\put(2,0){\lijn{0}}
\put(2,1)\cio
\put(2,1){\lijn{-1}}
\put(3,0)\cio
\end{picture}
\quad \to \quad
\begin{picture}(1,1)
\put(0,0)\cir
\put(0,0){\lijn{0}}
\put(1,0)\cir
\put(2,0)\cio
\put(1,0){\lijn{0}}
\end{picture}
\\[3mm]
\begin{picture}(3,2)
\put(0,0)\cir
\put(0,0){\lijn{0}}
\put(1,0)\cir
\put(1,0){\lijn{0}}
\put(1,1)\cir
\put(1,1){\lijn{0}}
\put(2,0)\cir
\put(2,0){\lijn{0}}
\put(2,1)\cir
\put(2,1){\lijn{-1}}
\put(2,2)\cio
\put(2,2){\lijn{-2}}
\put(3,0)\cio
\end{picture}
\quad \to \quad
\begin{picture}(2,2)(1,0)
\put(1,0)\cir
\put(1,0){\lijn{0}}
\put(2,0)\cir
\put(2,0){\lijn{0}}
\put(2,1)\cir
\put(2,1){\lijn{-1}}
\put(3,0)\cio
\end{picture}
\qquad
\begin{picture}(3,2)
\put(0,0)\cir
\put(0,0){\lijn{0}}
\put(1,0)\cir
\put(1,0){\lijn{0}}
\put(1,1)\cir
\put(1,1){\lijn{-1}}
\put(1,2)\cir
\put(1,2){\lijn{-1}}
\put(2,0)\cir
\put(2,0){\lijn{0}}
\put(2,1)\cir
\put(2,1){\lijn{-1}}
\put(2,2)\cio
\put(2,2){\lijn{-2}}
\put(3,0)\cio
\end{picture}
\quad \to \quad
\begin{picture}(3,2)
\put(0,0)\cir
\put(0,0){\lijn{0}}
\put(1,0)\cir
\put(1,0){\lijn{0}}
\put(1,1)\cir
\put(1,1){\lijn{-1}}
\put(2,0)\cir
\put(2,0){\lijn{0}}
\put(2,1)\cir
\put(2,1){\lijn{-1}}
\put(3,0)\cio
\end{picture}
\end{gather*}
\end{example}

\begin{lemma}
Suppose $G(T)$ is an $\al$-tree. Then $R(T|G(T))=\rh(G(T))$ if and only
if the number of child nodes in $T$ is at least 1 for every highest
lying node in $G(T)$.
\end{lemma}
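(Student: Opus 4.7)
The strategy is to write both $R(T|G(T))$ and $\rh(G(T))$ as products over the nodes $a\in G(T)$ and compare them factor by factor. Recall that $P(G(T))$ contributes $Z_{\ep-d}^{(i_G(a),j_G(a))}$ at the highest-lying-in-$G(T)$ node at each distance $d$ and $\si_{\ep-d}^{(i_G(a),j_G(a))}$ at every other node; since $Z_\be^{(i,j)}|_{z_\be=r_\be}=\si_\be^{(i,j+1)}$, the polynomial $\rh(G(T))$ is a pure $\si$-monomial whose factor at each top-in-$G(T)$ node has index $(i_G(a),j_G(a)+1)$ and whose factor at every other node has index $(i_G(a),j_G(a))$. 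The monomial $R(T|G(T))$ is likewise a product of $\si$-factors indexed by nodes of $G(T)$, but with indices $(i_T(a),j_T(a))$ read off from $T$. A brief preliminary check shows that no $Z$-factor can occur in $R(T|G(T))$: the only candidate would be a closed top-in-$T$ node belonging to $G(T)$, and the inductive definition of $G(T)$ strips exactly these topmost closed nodes at each distance, so none survives in $G(T)$.

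The comparison then splits into two cases. For $a\in G(T)$ that is not top-in-$G(T)$ at its level there is a $G(T)$-node $b$ directly above $a$ at the same level; since $b\in T$ as well, the parent of $a$ and the parent of $b$ are the same nodes in $T$ as in $G(T)$, and no child of $a$ has been removed. Hence $i_T(a)=i_G(a)$ and $j_T(a)=j_G(a)$, and the two factors coincide unconditionally. For a top-in-$G(T)$ node $a$ an inductive bookkeeping of $T\setminus G(T)$ shows that $i_T(a)=i_G(a)$: the removed nodes at the parent's level lie strictly above every parent of a $G(T)$-node and therefore cannot fall in the segment that defines~$i$. The pivotal point is then the $j$-index: by construction $G(T)$ removes precisely the highest child of the highest-in-$G(T)$ node at each level, so $j_T(a)=j_G(a)+1$ exactly when $a$ has at least one child in $T$; in that case $\si^{(i_T,j_T)}=\si^{(i_G,j_G+1)}$ matches the corresponding factor of $\rh(G(T))$, while if $a$ has no child in $T$ then $j_T(a)=j_G(a)=0$, whereas $\rh(G(T))$ demands $\si^{(i_G,1)}$, and the factors disagree. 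Combining the two cases gives the claimed equivalence.

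The main obstacle I anticipate is the verification that $i_T(a)=i_G(a)$ at the top-in-$G(T)$ nodes, since the inductive definition of $G(T)$ can leave non-obvious gaps at higher levels. The cure is the combinatorial observation that removal in $G(T)$ is confined to the topmost branch at each level, so the nodes of $T\setminus G(T)$ never enter the sibling-parent window that defines the $i$-index.
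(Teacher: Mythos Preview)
The paper states this lemma without proof, so there is nothing to compare against; your factor-by-factor strategy is exactly the right one and does lead to a complete argument. There is, however, a genuine slip in your ``preliminary check'' that you should repair.

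You assert that \emph{no} $Z$-factor can occur in $R(T|G(T))$ because ``$G(T)$ strips exactly these topmost closed nodes at each distance''. This is false in general. In the paper's own example (R.6) one has $G(T)$ equal to the tree of (P.4), and the leftmost node $(0,0)$ is the unique node at distance~$3$ in $T$, is closed, and \emph{does} belong to $G(T)$; it contributes $Z_{\ep-3}^{(30)}$ to $R(T|G(T))$. Likewise your claim that $i_T(a)=i_G(a)$ for every top-in-$G(T)$ node fails here: one computes $i_T(0,0)=3$ but $i_G(0,0)=2$. Both failures are symptoms of the same phenomenon: once some highest-in-$G(T)$ node has no children in $T$, nothing is removed at the next level, so $T\setminus G(T)$ can collapse and the bookkeeping breaks down.

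The fix is simply to argue \emph{up to the first failure} of the child-node condition. Suppose the condition holds for all highest-in-$G(T)$ nodes at distances $<d$. Then one checks inductively that at every distance $\le d$ the set $T\setminus G(T)$ is nonempty and consists precisely of the nodes lying strictly above the top-in-$G(T)$ node, and that the lowest among them is the removed child of the top-in-$G(T)$ node at the previous distance (hence has a $G(T)$-parent). From this your two claims --- no surviving $Z$-factor in $R(T|G(T))$ and $i_T(a)=i_G(a)$ --- follow exactly as you wrote, but only at distances $\le d$. If the condition holds throughout, this gives the ``if'' direction. If it first fails at distance $d$, then at that top node $a$ one has $j_T(a)=j_G(a)=0$, the factor in $R(T|G(T))$ is $\si^{(i_G(a),0)}$, and the factor in $\rh(G(T))$ is $\si^{(i_G(a),1)}$; since all factors at smaller distances already agree, the two monomials differ. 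This handles the ``only if'' direction without needing to control what happens beyond the first failure.
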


The sum $\sum R(T)$ over all trees satisfying the conditions of the lemma
lies in the ideal generated by the $\rh$-equations.
If the number of child nodes in $T$ is at least 1 for every highest
lying node in $G(T)$, but $G(T)$ is not an $\al$-tree, then one can
find as before a term of a $\la$-equation.

The most difficult case is when the condition on the number of child nodes
is not satisfied. An example is the remainder term (R.6), which is 
represented by the last two pictures in the previous example.
The term contains a factor, which is a term in a $\la$-equation, but
the corresponding dots are not connected by an edge. There is a way
to connect the edges differently, bringing a $\la$-term into evidence.
For this we refer to \cite[pp.~30--40]{ha}. We conclude:

\begin{lemma}
The sum
$\sum_{T\in \cal{G}(\ep-\de,\ep-\ga)} R(T)$
lies in the ideal generated by the $\la$ and $\rh$-equations.
\end{lemma}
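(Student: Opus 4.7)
The plan is to partition the sum over $\cal{G}(\ep-\de,\ep-\ga)$ according to the shape of the subtree $G(T)$, and in each block to package the partial sum as a combination of $\la$- and $\rh$-equations. Writing $R(T)=R(T|G(T))\cdot R(T|T\setminus G(T))$, the first factor depends only on $G(T)$, while the second encodes the admissible completions of $G(T)$ to a $\ga$-tree. This decomposition allows the sum to be reorganised by first summing over the fibres of the map $T\mapsto G(T)$.

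First, I would treat those $T$ for which $G(T)$ is an $\al$-tree and every highest lying node of $G(T)$ has at least one child in $T$. By the preceding lemma one has $R(T|G(T))=\rh(G(T))$. Holding $G(T)=S$ fixed and summing over the fibre packages these contributions as $\rh(S)$ times a sum of completion monomials, placing them in the $\rh$-ideal. Next I would dispose of those $T$ with $G(T)$ not an $\al$-tree by essentially repeating the argument of Lemma \ref{laeq}: locate the minimal subtree inside $G(T)$ where the $\al$-property first fails, sum $R(T)$ over the possible replacements of that subtree by other $\al$-trees of the same length, and recognise the result as a multiple of a $\la$-equation.

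The remaining, subtle case is when $G(T)$ is an $\al$-tree but some highest lying node $a$ of $G(T)$ lacks a child in $T$. Here $R(T|G(T))$ is not a pure $\rh$-monomial, and neither of the previous mechanisms applies directly. The strategy, following Hamm \cite[pp.~30--40]{ha}, is to reroute the edges around $a$: the highest subtree above $a$ is reconnected to a different ancestor, giving a different drawing of the same underlying monomial in which a $\la$-factor becomes visible. By pairing each such $T$ with its rerouted partners in a controlled way, the partial sum over this class collapses into the $\la$-ideal.

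The main obstacle is this last class. The first two cases amount to bookkeeping built on the previous lemma and Lemma \ref{laeq}, but the third case uses a genuinely new combinatorial move — reassigning parents in a $\ga$-tree — and verifying that the rerouted partial sums telescope into a $\la$-multiple without leftover terms is the heart of Hamm's argument. A secondary technical point, which I would need to check, is that the three classes above really exhaust $\cal{G}(\ep-\de,\ep-\ga)$ and that no tree is counted twice after the pairings in the third case; with the operator $G$ well defined on every $\ga$-tree, this should be a matter of direct enumeration.
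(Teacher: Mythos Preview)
Your proposal follows essentially the same three-case strategy that the paper sketches in the paragraphs immediately preceding this lemma: the case $G(T)$ an $\al$-tree with the child-node condition satisfied (yielding a $\rh$-factor), the case where a $\la$-term can be located as in Lemma~\ref{laeq}, and the hard case where the child-node condition fails, which both you and the paper defer to Hamm's edge-rerouting argument \cite[pp.~30--40]{ha}.

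Two small points. First, your assertion that in the factorisation $R(T)=R(T|G(T))\cdot R(T|T\setminus G(T))$ ``the first factor depends only on $G(T)$'' is not quite right: the $j$-value at a highest-lying node of $G(T)$ is the number of its children in $T$, so $R(T|G(T))$ genuinely depends on how many children were stripped in passing from $T$ to $G(T)$. This is exactly why the preceding lemma is an ``if and only if'' rather than an identity, and why your case split is needed; so the slip is harmless for the argument, but the sentence as written is false. Second, your partition differs slightly from the paper's: you place all $T$ with $G(T)$ not an $\al$-tree into the $\la$-case, whereas the paper reserves the $\la$-case for those $T$ additionally satisfying the child-node condition and sends the rest to the hard third case. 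Since neither you nor the paper carries out the third case in detail, this discrepancy is not decisive here, but if you were to flesh out the argument you would need to verify that the Lemma~\ref{laeq} mechanism really applies to $G(T)$ (rather than $T$) without the child-node hypothesis.
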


Together with Lemma \ref{laeq} this shows that the remainder 
$\sum_{T\in \cal{AG}(\ep-\de,\ep-\ga)} R(T)$ lies in the ideal $J$, thereby
concluding the proof of Theorem \ref{hammthm}.

\def\sig#1#2#3{\si_{#1}^{(#2#3)}}
\begin{example}[The base space for $e=6$, see \cite{ar,sb}]
There are 8 equations, which read as
\begin{gather*}
(l_3-r_3)\sig311, \qquad (l_4-r_4)\sig411, \\
\sig220\sig311, \qquad \sig320\sig411, \qquad \sig311\sig402,
\qquad\sig411\sig502,\\
\sig220\sig321\sig411+\sig230\sig320\sig311\sig412, 
\qquad
\sig311\sig412\sig502+\sig321\left(\sig411\right)^2\sig503.
\end{gather*}
We note the relations 
\[
\sig320-\sig311=(l_3-r_3)\sig321,
\qquad
\sig402-\sig411=(r_4-l_4)\sig412.
\]
We can take $\sig220$, $\sig230$, $l_3-r_3$, $\sig311$, $\sig321$,
$r_4-l_4$, $\sig411$, $\sig412$, $\sig502$ and $\sig503$ as independent
coordinates.
The relation with the coordinates in Section \ref{cqs}, see formula
\eqref{infdef}, is the following:
$l_3-r_3=t_3$, $\sig311=s_3^{(a_3-1)}$,  $\sig312=s_3^{(a_3-2)}$.
Also for $\ep=2$ and $\ep=5$ it is simple:
$\sig\ep i j=s_\ep^{(a_\ep-i-j)}$, but for $\ep=4$ it is more complicated:
$r_4-l_4=-t_4$, $\sig411=s_4^{(a_4-1)}$, while 
$\sig412=\tilde s_4^{(a_4-2)}$, where $\tilde s_\ep^{(\nu)}$
is defined as following \cite[5.1.1]{ar}, see also
\cite[p.~38]{sb}, by the equality
\[
Z_\ep^{(00)}=
(z_\ep+t_\ep)\sum_{\nu=0}^{a_\ep-1}s_\ep^{(\nu)}z_\ep^{a_\ep-1-\nu}=
z_\ep\sum_{\nu=0}^{a_\ep-1}\tilde s_\ep^{(\nu)}(z_\ep+t_\ep)^{a_\ep-1-\nu},
\]
where we put $s_\ep^{(0)}=1$. This implies that
\[
\tilde s_\ep^{(\nu)}=\sum_{\mu=0}^\nu \binom{a_\ep-2-\mu}{a_\ep-2-\nu}
(-t_\ep)^{\mu-\nu}s_\ep^{(a_\ep-1-\mu)}\;.
\]
The primary decomposition gives  five reduced components and one 
embedded component. The five  components are parametrised by the five
sparse coloured triangles of height 2.
\unitlength=1pt
\[
\begin{matrix}
\begin{picture}(50,30)
\put(0,0){\wit}
\put(30,0){\wit}
\put(15,15){\wit}
\end{picture}\colon&
\sig311=\sig411=0\hfill\\[1mm]
\begin{picture}(50,30)
\put(0,0){\zwart}
\put(30,0){\wit}
\put(15,15){\wit}
\end{picture}\colon&
\sig220=l_3-r_3=\sig411=\sig412=0\hfill\\[1mm]
\begin{picture}(50,30)
\put(0,0){\wit}
\put(30,0){\zwart}
\put(15,15){\wit}
\end{picture}\colon&
\sig311=\sig321=r_4-l_4=\sig502=0\hfill\\[1mm]
\begin{picture}(50,30)
\put(0,0){\zwart}
\put(30,0){\wit}
\put(15,15){\zwart}
\end{picture}\colon&
\sig220=\sig230=l_3-r_3=r_4-l_4=\sig411=\sig502=0\\[1mm]
\begin{picture}(50,30)
\put(0,0){\wit}
\put(30,0){\zwart}
\put(15,15){\zwart}
\end{picture}\colon&
\sig220=l_3-r_3=\sig311=r_4-l_4=\sig502=\sig503=0
\end{matrix}
\]
The embedded component is supported at
$\sig220=l_3-r_3=\sig311=r_4-l_4=\sig411=\sig502=0$, which is the locus 
of singularities of embedding dimension 6.
\end{example}

The primary decomposition, given in the example above, holds
if all $a_\ep$ are large enough, meaning that $a_\ep\geq \max(k_\ep)$,
where the $k_\ep$ depend on the possible triangles.
By openness of versality one deduces the structure for all cyclic
quotient singularities of the given embedding dimension.
The formulas for the base space and the total space of the deformation
hold in all cases, with suitable interpretations.
We note that $Z_\ep^{(ij)}$ is a monic polynomial in $z_\ep$
of degree $a_\ep-i-j$, obtained by division with remainder.
Therefore $Z_\ep^{(ij)}=1$ if $i+j=a_\ep$ and 
$Z_\ep^{(ij)}=0$ if $i+j>a_\ep$. For the remainder terms $\si_\ep^{(ij)}$
we find therefore that $\si_\ep^{(ij)}=1$ if $i+j=a_\ep+1$ and 
$\si_\ep^{(ij)}=0$ if $i+j>a_\ep+1$. By using these values the formulas
hold.
Also the description of the reduced components holds in general, if
one takes an equation $1=0$ to mean that the component is absent.

\begin{example}[The cone over the rational normal curve \cite{ar,sb}]
\begin{proposition}
For the cone over the rational normal curve of degree $e-1$ the
versal deformation is given by the equations $L_{\de-1} R_{\ep+1} = 
P_{\de,\ep}$ with $P_{\ep,\ep}=Z_\ep^{(00)}$ and
\[
P_{\de,\ep} =
 Z_\de^{(10)}Z_\ep^{(01)}+
  \sum_{\ga=1}^{\ep-\de-1} \si_{\ep-\ga}^{(11)}Z_{\de+\ga}^{(10)}
\]
for $\ep-\de>0$.
\end{proposition}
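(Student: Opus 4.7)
The plan is to apply Hamm's formula (Theorem \ref{hammthm}), which expresses $P_{\de,\ep}$ as $\sum_T P(T)$ over $\al$-trees, and to exploit the drastic simplification coming from $a_\be = 2$. In that specialisation, $Z_\be^{(ij)}$ equals $1$ when $i+j = 2$ and vanishes for $i+j > 2$, while $\si_\be^{(ij)}$ equals $1$ when $i+j = 3$, vanishes for $i+j > 3$, and is also zero in the boundary cases $(1,0)$ and $(0,1)$. So I first record that $P(T)$ is nonzero iff every $Z$-node label $(i,j)$ of $T$ satisfies $i+j \le 2$ and every $\si$-node label satisfies $2 \le i+j \le 3$.

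Next I want to classify the surviving trees. The root has label $(0,j)$, forcing $j \le 2$; every non-root $Z$-node has $i \ge 1$, so $j \le 1$; and every $\si$-node must have $j \ge 1$, since a $\si$-leaf whose parent coincides with the parent of the node directly above it would give label $(1,0)$, while the $\al$-tree property rules out this coincidence in any other way. Induction on $\ep - \de$ then shows that the surviving trees form a one-parameter family indexed by $\ga \in \{0, 1, \dots, \ep-\de-1\}$: for $\ga = 0$ the chain tree with a single node in each column, and for each $\ga \ge 1$ a unique tree $T_\ga$ in which the root has two children, a top subtree extending for the appropriate number of further levels before terminating in a $Z$-leaf at column $\de+\ga$, and a bottom spine continuing with the analogous pattern one ladder-rung lower.

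Once the classification is in hand, evaluating $P(T)$ is routine: in the chain tree all intermediate labels are $(1,1)$, giving $P(T_0) = Z_\de^{(10)} Z_\ep^{(01)}$; in $T_\ga$ all labels except two lie on the boundaries $i+j=2$ or $i+j=3$ and hence equal $1$, the two remaining factors being $Z_{\de+\ga}^{(10)}$ at the leaf terminating the top subtree and $\si_{\ep-\ga}^{(11)}$ at the $\si$-node on the spine just below the $\ga$-th branching, so $P(T_\ga) = \si_{\ep-\ga}^{(11)} Z_{\de+\ga}^{(10)}$. Summing over $\ga$ then yields the asserted formula, and versality is immediate from Theorem \ref{arndtthm}. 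The hard part will be the combinatorial classification: the strict inequality in the $\al$-tree property has to be used carefully to exclude mixed configurations in which the two root-siblings both carry deep subtrees of comparable length, because in such a configuration the $i$-value of some descendant of the top sibling is forced up to $2$, and the constraint $i+j \le 2$ on a $Z$-node needed to reach column $\de$ then becomes impossible to satisfy.
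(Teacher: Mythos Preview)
Your approach is the same as the paper's: specialise Theorem~\ref{hammthm} to $a_\be = 2$ and determine which $\al$-trees give a nonzero contribution. The final list of surviving trees and the evaluation $P(T_\ga) = \si_{\ep-\ga}^{(11)} Z_{\de+\ga}^{(10)}$ are both correct. But your justification of the key constraint ``every $\si$-node must have $j \ge 1$'' is backwards. If a $\si$-node $a$ is a leaf, the $\al$-tree property forces the node $b$ directly above $a$ to have a \emph{different} parent from $a$ (a sibling $b$ above a leaf $a$ would need $|T(b)| < |T(a)| = 0$); hence $i_a \ge 2$, and the resulting factor $\si^{(20)}$ or $\si^{(30)}$ is nonzero. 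So $\si$-leaves are not excluded by any local consideration, and the induction you invoke cannot proceed from the constraints you list.

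What actually kills such a configuration is global: the extra node in $a$'s column pushes the $i$-value of the top node in the next column to at least $3$, so the $Z$-factor there vanishes. The paper handles this by tracing labels along the bottom spine. The bottom child of the root has $i = 1$; as long as its label is $(1,2)$ its own bottom child again has $i = 1$; the spine node in the penultimate column has $j \le 1$, so somewhere the spine label becomes $(1,1)$ --- this is the unique $\si^{(11)}$-factor. From that node leftward every spine node is forced to have label $(2,1)$, and the upper chains are forced simultaneously, so that every remaining $Z$-label lies on $i+j = 2$ and every remaining $\si$-label on $i+j = 3$. This propagation argument, rather than a local bound on $j$, is what pins down the staircase shape; your final paragraph hints at the right mechanism, but it is needed already to establish the constraint you use in the classification step.
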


\begin{proof}
We derive the formula from Hamm's description of the equations
(i.e., Theorem \ref{hammthm}).
All terms in the equations containing $Z_\ep^{(ij)}$ with $i+j>2$
and $\si_\ep^{(ij)}$ with $i+j>3$ are absent.
We characterise the remaining $\al$-trees. We have of course a
simple chain, giving rise to the first term in the formula.
Suppose we have a factor $\si_{\ep-\ga}^{(11)}$, coming from 
a node $a$  on the bottom line. Then there is a node lying directly above
it, having the same parent. 
The unique child node of $a$ has $i=2$. If it has itself a child node,
then necessarily $ij=21$ , and there is a node lying above it.
This process continues until we come to an end node with $ij=20$.
If the parent of $a$  is not the root, 
then necessarily  $ij=12$ for it, so there is  a node
lying above it. For the node above $a$ we find then that $i=2$,
and there lies a whole chain above the chain starting with this node.
In this way we proceed to the root, which has $ij=02$.
We find that the tree has the following shape: from each node on 
the right of the node $a$ originates a chain
of maximal length. An example is
\[
\unitlength=20pt
\begin{picture}(6,3)
\put(0,0)\cir
\put(0,0){\lijn{0}}
\put(1,0)\cir
\put(1,0){\lijn{0}}
\put(1,1)\cir
\put(1,1){\lijn{0}}
\put(2,0)\cir
\put(2,0){\lijn{0}}
\put(2,1)\cir
\put(2,1){\lijn{0}}
\put(2,2)\cir
\put(2,2){\lijn{0}}
\put(3,0)\cir
\put(3,0){\lijn{0}}
\put(3,1)\cir
\put(3,1){\lijn{-1}}
\put(3,2)\cir
\put(3,2){\lijn{-1}}
\put(3,3)\cir
\put(3,3){\lijn{-1}}
\put(4,0)\cir
\put(4,0){\lijn{0}}
\put(4,1)\cir
\put(4,1){\lijn{-1}}
\put(4,2)\cir
\put(4,2){\lijn{-1}}
\put(5,0)\cir
\put(5,0){\lijn{0}}
\put(5,1)\cir
\put(5,1){\lijn{-1}}
\put(6,0)\cir
\end{picture}
\]
Finally we observe that the lowest lying child node of the root
or of a node with $ij=12$ cannot have $i=2$, and that the next to last
node on the left cannot have $ij=12$, so there has  to be a node
$a$ with $ij=11$.

Consider now $P(T)$, if $T$ is not a chain.
The only node with $ij=10$ lies as end-node on the highest chain,
so indeed $P(T)=\si_{\ep-\ga}^{(11)}Z_{\de+\ga}^{(10)}$.
\end{proof}

It follows that 
\[
\la_{\de,\ep} =
  \sum_{\ga=0}^{\ep-\de-1} \si_{\ep-\ga}^{(11)}\si_{\de+\ga}^{(20)}
\]
and
\[
\rh_{\de,\ep} =
 \si_\de^{(11)}\si_\ep^{(02)}+
  \sum_{\ga=1}^{\ep-\de-1} \si_{\ep-\ga}^{(11)}\si_{\de+\ga}^{(11)}\;.
\]
With $l_\ep-r_\ep=t_\ep$, $\si_\ep^{(11)}=s_\ep$, $\si_\ep^{(20)}=s_\ep+t_\ep$
and $\si_\ep^{(02)}=s_\ep-t_\ep$ we get the same formulas as Arndt
gives \cite[5.1.4]{ar}.

Note that $(\si_\ep^{(11)})^3=\si_\ep^{(11)}\rh_{\ep-1,\ep+1}-
\si_{\ep-1}^{(11)}\rh_{\ep-1,\ep}$, so $\si_\ep^{(11)}$ lies in the
radical of the ideal for $3<\ep<e-1$; for $2<\ep<e-2$ one has a formula
with $\la$-equations. So indeed the Artin component is the only
component, if $e>5$.
\end{example}

Other applications of the explicit equations include
\begin{itemize}
\item the discriminant of the components and adjacencies, studied
by Christophersen \cite{ch} and Brohme \cite{sb},
\item embedded components. For low embedding dimension Brohme found 
all components. He made a general conjecture
\cite[4.4]{sb}.
\end{itemize}

\section{Reduced base space}
The ideal $J$ of the base space is described explicitly by Theorems
\ref{arndtthm} and \ref{hammthm}. We have to determine the radical
$\sqrt J$ of this ideal. We are able to do this explicitly for
low embedding dimension, and formulate a conjecture in general. We prove
that the proposed ideal describes the reduced components. The combinatorics
involved resembles that described by Jan Christophersen in his thesis
\cite{ch2}. To prove the conjectural part one has to show that the
monomials we give below, really lie in $\sqrt J$, something we do not do here.

\begin{example}[$e=6$ continued] 
We multiply $\rh_{3,5}$, the last one  of the 8 equations
for the base space, by $\sig411$.
Then the first summand contains the factors $\sig411\sig502$ so lies in the
ideal $J$. Therefore also the second term 
$\sig321\left(\sig411\right)^3\sig503$
lies in $J$, and  $\sig321\sig411\sig503$ lies in the radical $\sqrt J$. Then 
also the first summand of $\rh_{3,5}$ lies in $\sqrt J$. If we
multiply $\la_{2,4}$ with $\sig411$, then the second summand lies in $J$.
We find that the first summand of $\la_{2,4}$ lies in the radical, so also
the second summand. One has $\sig311(\sig320-\sig311)=\sig311(l_3-r_3)
\sig321$, which lies in the ideal, so not only the second summand
$\sig230\sig320\sig311\sig412$, 
but also $\sig230\left(\sig311\right)^2\sig412$ and therefore
$\sig230\sig311\sig412$ lie in $\sqrt J$.
We find the following equations
\begin{gather*}
(l_3-r_3)\sig311, \qquad (l_4-r_4)\sig411, \\
\sig220\sig311, \qquad \sig320\sig411, \qquad \sig311\sig402,
\qquad\sig411\sig502,\\
\sig220\sig321\sig411, \qquad \sig230\sig311\sig412, 
\qquad
\sig311\sig412\sig502, \qquad \sig321\sig411\sig503.
\end{gather*}
This ideal is not reduced, as it contains $\left(\sig311\right)^2\sig411=
\sig311\sig320\sig411-\sig311(l_3-r_3)\sig321\sig411$, but not
$\sig311\sig411$. But it is easy to find the reduced components from the
given equations.
\end{example}

Our first, rough conjecture is that each summand of the equations
$\la_{\de,\ep}$, $\rh_{\de,\ep}$ lies in the radical $\sqrt J$. 
Let us look at
$\rh_{\ep-3,\ep}$. We note that (P.4) and (P.5) yield the same term,
being 
$
\begin{smallmatrix}
21 & 21 & 11 & 03 \\
  & 11  &  12
\end{smallmatrix}
$
and 
$
\begin{smallmatrix}
21 & 11 & 12 & 03 \\
  & 21  &  11
\end{smallmatrix}
$
respectively. As we have the equation 
$
\begin{smallmatrix}
21 & 11 & 03 
\end{smallmatrix}
$
in the radical, these terms do not contribute new equations. As (P.8) itself
already lies in the ideal, we are left with 5 terms (a Catalan number!).
One computes that indeed each summand lies in the radical. We look at
the term in $\rh_{\ep-3,\ep}$, coming from (P.6):
\[
\begin{matrix}
31 & 11 & 12 & 03 \\
  & 20  &  12 \\
& 11
\end{matrix}\;.
\]
\unitlength=1pt
We claim that it is associated to the extended triangle
\[
\begin{picture}(75,36)(-3,-3)
\put(0,0){\zwart}
\put(30,0){\zwart}
\put(60,0){\zwart}
\put(7.5,7.5){\line(1,0){45}}
\put(15,15){\zwart}
\put(45,15){\wit}
\put(30,30){\zwart}
\end{picture}
\]
The easiest way to see this is via the numbers $k_\ep$ and $\al_\ep$, 
being $\bcf k = [3,1,2,2]$ and $ (\boldsymbol \al)=(1,3,2,1)$ in this case.
One sees that there are $\al_\ep$ factors $\sig\ep ij$, and they all have
$i+j=k_\ep+1$. The other terms can be parametrised in the same way by the
other extended triangles. The same picture parametrises the term in
$\la_{\ep-3,\ep}$, coming from (P.6): 
\[
\begin{matrix}
40 & 20 & 21 & 12 \\
  & 20  &  12 \\
& 11
\end{matrix}\;.
\]
In the radical we find 
$
\begin{smallmatrix}
31 & 11 & 12& 03 
\end{smallmatrix}
$
and 
$
\begin{smallmatrix}
40 & 11 & 12&12 
\end{smallmatrix}
$.
For the last term we compute as follows:
$\sig{\ep-1}12(\sig{\ep-1}21-\sig{\ep-1}12)=
\sig{\ep-1}12(l_{\ep-1}-r_{\ep-1})\sig{\ep-1}22=
(\sig{\ep-1}11-\sig{\ep-1}02)\sig{\ep-1}22$, and we observe that
the term contains the factors
$\sig{\ep-2}20$ and $\sig{\ep-2}11$.

We can now make our conjecture more precise. As remarked before,
we  do not quite get the radical $\sqrt J$ of the ideal of the base space,
but an intermediate ideal, obtained from the summands
in the generators of $J$. As variables we use $l_\ep-r_\ep$,
and the $\sig\ep ij$, which are connected by the relations
\[
\sig\ep{i+1,}j-\sig\ep i{,j+1}=(l_\ep-r_\ep)\sig\ep{i+1,}{j+1}\:.
\]
\begin{conjecture}
For the ideal $J$ of the base space of the versal deformation of 
a cyclic quotient singularity of embedding dimension $e$ and
its radical $\sqrt J$ holds that
$\sqrt J=\sqrt {J'}\supset J' \supset J$, where $J'$ is the
ideal generated by $(l_\ep-r_\ep)\sig\ep11$,
for $2<\ep<e-1$ and monomials $\la(\triangle_{\de,\ep})$, 
$2\leq \de<\ep<e-1$, and $\rh(\triangle_{\de,\ep})$, 
$2<\de<\ep\leq e-1$, parametrised by sparse coloured triangles 
$\triangle_{\de,\ep}$,
of the form
$
\prod_{\be=\de}^\ep \sig\be{i_\be}{j_\be}\;.
$
The numbers $i_\be$, $j_\be$ are determined as follows:
if $\al_\be>1$, then in both $\la(\triangle_{\de,\ep})$
and $\rh(\triangle_{\de,\ep})$
\begin{align*}
i_\be&= \#\{\text{black dots on right half-line $l_\ep$}\} \\
j_\be&= \#\{\text{black dots on left half-line $l_\ep$}\} 
\end{align*}
but if $\al_\be=1$, then in $\la(\triangle_{\de,\ep})$
\begin{align*}
i_\be&= \#\{\text{black dots on right half-line $l_\ep$}\}+1 \\
j_\be&= \#\{\text{black dots on left half-line $l_\ep$}\} 
\end{align*}
and in $\rh(\triangle_{\de,\ep})$
\begin{align*}
i_\be&= \#\{\text{black dots on right half-line $l_\ep$}\} \\
j_\be&= \#\{\text{black dots on left half-line $l_\ep$}\}+1 
\end{align*}
\end{conjecture}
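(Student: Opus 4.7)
The plan is to establish $\sqrt J = \sqrt{J'}$ through the chain $J \subseteq J' \subseteq \sqrt J$. The first inclusion is combinatorial bookkeeping; the second contains the substantive conjectural content.

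For $J \subseteq J'$, I would start from Hamm's expression (Theorem \ref{hammthm}) of $P_{\de,\ep}$ as $\sum_T P(T)$ over $\al$-trees of length $\ep-\de+1$. Setting $z_\be = l_\be$ (respectively $z_\be = r_\be$) for $\de \leq \be \leq \ep$ gives $\la_{\de,\ep}$ (respectively $\rh_{\de,\ep}$) as $\sum_T \la(T)$ (respectively $\sum_T \rh(T)$). Pairs of $\al$-trees can yield identical monomials, as happened with the (P.4)/(P.5) duplication in the $e=6$ computation; after using $\si_\ep^{(i+1,j)} - \si_\ep^{(i,j+1)} = (l_\ep - r_\ep)\si_\ep^{(i+1,j+1)}$ to collect such pairs modulo $(l_\ep - r_\ep)\si_\ep^{(11)} \in J$, I would set up a bijection between the surviving monomials and sparse coloured triangles $\triangle_{\de,\ep}$. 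The dictionary is the extended triangle suggested just before the conjecture: a dot $(\al,\be)$ is coloured black precisely when the corresponding $\al$-tree node contributes a ``double-index'' factor, and the resulting indices $(i_\be, j_\be)$ then equal the black-dot counts on the two half-lines of $l_\be$ as stated. The step is notationally heavy but requires no new ideas.

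For $J' \subseteq \sqrt J$, which is the heart of the conjecture, I would induct on the height $h = \ep - \de - 1$ of the triangle $\triangle_{\de,\ep}$ attached to a given monomial generator $m$ of $J'$. The base cases $h = 1, 2$ are covered by the $e=5$ and $e=6$ analyses. For the induction step, given $m = \la(\triangle_{\de,\ep})$ or $\rh(\triangle_{\de,\ep})$, I would seek a monomial $M$, determined by the colouring of $\triangle$, such that the corresponding Arndt generator of $J$ multiplied by $M$ decomposes into summands where every one except a multiple of a suitable power of $m$ factors through a sparse-triangle monomial $\la(\triangle')$ or $\rh(\triangle')$ of strictly smaller height, hence already lies in $\sqrt J$ by induction. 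The relations $\si_\ep^{(i+1,j)} - \si_\ep^{(i,j+1)} = (l_\ep - r_\ep)\si_\ep^{(i+1,j+1)}$ together with $(l_\ep - r_\ep)\si_\ep^{(11)} \in J$ would then let one convert between nearby $\si_\ep^{(ij)}$ and extract the required power of $m$.

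The main obstacle is finding the correct multiplier $M$ uniformly in $\triangle$: the $e=6$ example is already delicate, alternating between $\la$- and $\rh$-equations and invoking non-obvious identities such as $\si_\ep^{(11)}(\si_\ep^{(20)} - \si_\ep^{(11)}) = \si_\ep^{(11)}(l_\ep - r_\ep)\si_\ep^{(21)}$. The absence of a uniform combinatorial rule for $M$, readable directly off the sparse triangle, is exactly what leaves the statement a conjecture. A further subtlety is that $J$ is genuinely non-reduced, since for $e=6$ one has $(\si_3^{(11)})^2\si_4^{(11)} \in J$ but $\si_3^{(11)}\si_4^{(11)} \notin J$; the induction cannot be carried out modulo $J$ alone, but must be run modulo the partially-known $\sqrt J$, with the inductive hypothesis carefully tracked.
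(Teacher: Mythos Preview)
The statement is a conjecture, and the paper does not prove it. The paper explicitly says ``To prove the conjectural part one has to show that the monomials we give below, really lie in $\sqrt J$, something we do not do here.'' What the paper \emph{does} prove is the subsequent Proposition, that $J'$ has $C_{e-3}$ reduced components; combined with the earlier result that $J$ has the same components (via Koll\'ar--Shepherd-Barron), this gives $\sqrt J = \sqrt{J'}$ \emph{provided} one knows the inclusions, but the inclusions themselves are left open. So there is no paper proof to compare your outline against, and you are right to flag the $J' \subset \sqrt J$ direction as the genuine obstacle.

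However, your treatment of the inclusion $J \subset J'$ has a real gap. You propose to collect duplicate $\al$-tree monomials and then set up a bijection between ``surviving monomials'' and sparse coloured triangles. This cannot work as stated: for $\ep-\de = 4$ there are $96$ $\al$-trees and only $14$ sparse triangles, and the paper notes that even after discarding the $55$ terms lying in $J$ one is left with $41$, not $14$. What is actually needed is not a bijection but the statement that every $\al$-tree monomial $\la(T)$ or $\rh(T)$ is a \emph{multiple} of some monomial generator $\la(\triangle)$ or $\rh(\triangle)$ of $J'$, where $\triangle$ will typically have strictly smaller height than the triangle indexing the ambient Arndt equation. (In the $e=6$ case you can check this directly: the second summand $\sig321(\sig411)^2\sig503$ of $\rh_{3,5}$ is $\sig411$ times the $J'$-generator $\sig321\sig411\sig503$.) Showing this divisibility uniformly in $T$ is itself a nontrivial combinatorial claim, closely tied to the open problem the paper raises of characterising the Catalan-many special $\al$-trees; your outline does not address it. Also a small correction: the (P.4)/(P.5) duplication you cite occurs in the $P_{\ep-3,\ep}$ computation, which is embedding dimension $7$, not $6$.
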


\begin{example}
\[
\begin{picture}(150,80)(-30,-15)
\put(0,0){\zwart}
\put(30,0){\zwart}
\put(60,0){\zwart}
\put(90,0){\zwart}
\put(-15,-15){\line(1,1){75}}
\put(-15,-15){\line(-1,1){15}}
\put(15,-15){\line(1,1){60}}
\put(15,-15){\line(-1,1){30}}
\put(45,-15){\line(1,1){45}}
\put(45,-15){\line(-1,1){45}}
\put(75,-15){\line(1,1){30}}
\put(75,-15){\line(-1,1){60}}
\put(105,-15){\line(1,1){15}}
\put(105,-15){\line(-1,1){75}}
\put(7.5,7.5){\line(1,0){75}}
\put(15,15){\zwart}
\put(45,15){\wit}
\put(75,15){\zwart}
\put(30,30){\wit}
\put(60,30){\wit}
\put(45,45){\zwart}
\end{picture}
\]
One has $\la(\triangle_{\de,\ep})=
\begin{smallmatrix}
40&11&22&11&13
\end{smallmatrix}
$
and
$\rh(\triangle_{\de,\ep})=
\begin{smallmatrix}
31&11&22&11&04
\end{smallmatrix}$.
\end{example}

\begin{remark} The generators of $J'$ correspond to certain terms in generators
of $J$, so there is a special
subclass of $\al$-trees, counted by the Catalan numbers. 
It would be interesting to characterise them. The five 
trees of length 3 can be seen from the previous pictures. We now
list all 14 trees of length 4.
\unitlength=20pt
\def\pcl#1#2#3{\put(#1,#2)\cir \put(#1,#2){\lijn{-#3}}}
\def\base{\pcl000 \pcl100 \pcl 200 \pcl300 \put(4,0)\cir}
\begin{gather*}
\begin{picture}(4,1)
\base
\end{picture}
\qquad
\begin{picture}(4,1)
\base \pcl111
\end{picture}
\qquad
\begin{picture}(4,1)
\base \pcl211
\end{picture}
\qquad
\begin{picture}(4,1)
\base \pcl311
\end{picture}
\\[3mm]
\begin{picture}(4,2)
\base \pcl111 \pcl121 \pcl211
\end{picture}
\qquad
\begin{picture}(4,2)
\base \pcl110 \pcl211 \pcl222
\end{picture}
\qquad
\begin{picture}(4,2)
\base \pcl111 \pcl311 
\end{picture}
\\[3mm]
\begin{picture}(4,2)
\base \pcl211 \pcl 221 \pcl311 
\end{picture}
\qquad
\begin{picture}(4,2)
\base \pcl210 \pcl311 \pcl322 
\end{picture}
\\[3mm]
\begin{picture}(4,3)
\base \pcl111 \pcl120 \pcl221 \pcl 311 \pcl322
\end{picture}
\qquad
\begin{picture}(4,3)
\base \pcl110 \pcl210 \pcl220 \pcl311 \pcl322 \pcl333 
\end{picture}
\qquad
\begin{picture}(4,3)
\base \pcl111 \pcl121 \pcl131 \pcl211 \pcl221 \pcl311 
\end{picture}
\\[3mm]
\begin{picture}(4,4)
\base \pcl120 \pcl211 \pcl221 \pcl232 \pcl242 \pcl311 \pcl322
\end{picture}
\qquad
\begin{picture}(4,4)
\base \pcl110 \pcl130 \pcl211 \pcl 222 \pcl231 \pcl242 \pcl322 
\end{picture}
\end{gather*}
\end{remark}

Inductive proofs about the reduced components often use the
procedure of blowing up and blowing down \cite[1.1]{js}. The
term comes from the analogy with chains of rational curves on a smooth
surface, which can be described by continued fractions.
For sparse coloured triangles it means the following \cite[Lemma 1.8]{sb}.

Blowing up is a way to obtain an extended triangle of height $e-2$
from an extended triangle of height $e-3$. 
Choose  an index $2\leq \ep\leq e$. We define a shift function
$s\colon \{2,\dots,e-1\} \to  \{2,\dots,\ep-1\} \cup
 \{\ep+1,\dots,e\} $ by $s(\be)=\be$ if $\be\in \{2,\dots,\ep-1\} $
and $s(\be)=\be+1$ if $\be\in \{\ep,\dots,e-1\}$.
The blow-up  $\Bl_\ep({\underline\triangle})$  of $\underline\triangle$ at
the index $\ep$ is the triangle with $(s(\be),s(\ga))\in
B(\Bl_\ep({\underline\triangle}))$ if and only if $(\be,\ga)\in
B(\underline\triangle)$, and from the points on the line $l_\ep$ only
$(\ep-1,\ep)$ and $(\ep,\ep+1)$ are black.
If $\ep=2$, then only $(2,3)$ is black, while only $(e-1,e)$ is
black if $\ep=e$. 
By deleting the base line we get the blow-up $\Bl_\ep({\triangle})$.
In terms of pictures
this means that one moves the sector, bounded by $l_\ep$ and $l_{\ep-1}$
with lowest point $(\ep-1,\ep)$, one position up, and moves the arising
two triangles sideways, to make room for a new line $l_\ep$, which 
has no black dots in $\Bl_\ep({\triangle})$. If $\ep=2$ or $\ep=e$
one just adds an extra line without black dots to the triangle.
\begin{example}
\[
\begin{picture}(150,60)(-30,-15)
\put(0,0){\zwart}
\put(30,0){\zwart}
\put(60,0){\zwart}
\put(90,0){\zwart}
\put(45,-15){\line(1,1){45}}
\put(45,-15){\line(-1,1){45}}
\put(75,-15){\line(1,1){30}}
\put(75,-15){\line(-1,1){60}}
\put(7.5,7.5){\line(1,0){75}}
\put(15,15){\zwart}
\put(45,15){\wit}
\put(75,15){\wit}
\put(30,30){\zwart}
\put(60,30){\wit}
\put(45,45){\zwart}
\put(13,45){\makebox(0,0)[r]{$l_5$}}
\put(-2,30){\makebox(0,0)[r]{$l_4$}}
\end{picture}
\qquad
\raisebox{20pt}{$\xrightarrow{\Bl_5}$}
\qquad
\begin{picture}(180,75)(-30,-15)
\put(0,0){\zwart}
\put(30,0){\zwart}
\put(60,0){\zwart}
\put(90,0){\zwart}
\put(120,0){\zwart}
\put(45,-15){\line(1,1){60}}
\put(45,-15){\line(-1,1){45}}
\put(75,-15){\line(1,1){45}}
\put(75,-15){\line(-1,1){60}}
\put(105,-15){\line(1,1){30}}
\put(105,-15){\line(-1,1){75}}
\put(7.5,7.5){\line(1,0){105}}
\put(15,15){\zwart}
\put(45,15){\wit}
\put(75,15){\zwart}
\put(105,15){\wit}
\put(30,30){\wit}
\put(60,30){\wit}
\put(90,30){\wit}
\put(45,45){\zwart}
\put(75,45){\wit}
\put(60,60){\zwart}
\put(107,45){\makebox(0,0)[l]{$l_4$}}
\put(122,30){\makebox(0,0)[l]{$l_5$}}
\put(137,15){\makebox(0,0)[l]{$l_6$}}
\end{picture}
\]
\end{example}
The inverse process is called blowing down at $\ep$. This is
possible at $\ep$, for $2<\ep<e$, if the dot $(\ep-1,\ep+1)$ is
black; by lemma \ref{sparse} the line $l_\ep$ does not contain black dots.
Actually, if $l_\ep$ is empty, but $\al_\ep>1$, i.e., there  are
black dots above it, then it follows that  $(\ep-1,\ep+1)$ is
black: otherwise there cannot be enough black dots in a triangle
with black vertex on the  lowest level. 

\begin{proposition}
The ideal $J'$ has $C_{e-3}=
\frac{1}{e-2} \binom{2(e-3)}{ e-3}$ reduced components.
\end{proposition}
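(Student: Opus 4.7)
My plan is to reduce the statement to the already-established fact that $V(J)$ has exactly $C_{e-3}$ reduced irreducible components, one for each sparse coloured triangle $\triangle$ of height $e-4$; this follows from the earlier Catalan-count theorem together with the description of the components recalled from \cite{js} in Section 3. Writing $C(\triangle)\subset V(J)$ for the component indexed by $\triangle$, and noting that the containment $J\subseteq J'$ gives $V(J')\subseteq V(J)$, it suffices to establish the reverse inclusion $C(\triangle)\subseteq V(J')$ for every $\triangle$. Granting this, $V(J')=V(J)$ set-theoretically and $V(J')$ inherits exactly the $C_{e-3}$ irreducible components of $V(J)$.

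First I would work out which among the coordinate functions $l_\ep-r_\ep$ and $\si_\ep^{(ij)}$ vanish identically on a fixed component $C(\triangle)$. On $C(\triangle)$ the polynomial $Z_\ep^{(00)}$ factors into a product of appropriate powers of $L_\ep$ and $R_\ep$ with a monic polynomial of degree $a_\ep-k_\ep$, the exponents being determined by the numbers $\al_{\ep-1}(\triangle)$ and $\al_{\ep+1}(\triangle)$; additionally $l_\ep=r_\ep$ whenever $\al_\ep(\triangle)>1$. Iterating the defining relations $\si_\ep^{(i+1,j)}=Z_\ep^{(ij)}(l_\ep)$ and $\si_\ep^{(i,j+1)}=Z_\ep^{(ij)}(r_\ep)$ one shows inductively that $\si_\ep^{(ij)}$ vanishes on $C(\triangle)$ for all $(i,j)$ below certain thresholds determined by the pair $(\al_{\ep-1}(\triangle),\al_{\ep+1}(\triangle))$, with the surviving $\si_\ep^{(ij)}$ beyond these thresholds parametrising the component.

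Granting this, the verification that every generator of $J'$ vanishes on $C(\triangle)$ is a purely combinatorial task. The generators $(l_\ep-r_\ep)\si_\ep^{(11)}$ are immediate: the first factor vanishes if $\al_\ep(\triangle)>1$, and otherwise the boundary of the threshold annihilates $\si_\ep^{(11)}$. For a generator $\la(\triangle_{\de,\ep})$ or $\rh(\triangle_{\de,\ep})$, written according to the conjecture's prescription as $\prod_{\be=\de}^{\ep}\si_\be^{(i_\be,j_\be)}$ with $(i_\be,j_\be)$ read off from the indexing sparse triangle $\triangle_{\de,\ep}$, one needs to exhibit some $\be$ whose index pair $(i_\be,j_\be)$ falls in the vanishing range of $C(\triangle)$ determined in the previous step.

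The hard part is exactly this combinatorial matching between the two sparse triangles. Given $\triangle=\triangle_{2,e-1}$ of full height and another sparse triangle $\triangle_{\de,\ep}$ indexing the generator, I would show that there always exists $\be\in[\de,\ep]$ where the pair $(i_\be,j_\be)$ coming from $\triangle_{\de,\ep}$ is incompatible with the thresholds $(\al_{\be-1}(\triangle),\al_{\be+1}(\triangle))$ coming from $\triangle$. My approach is induction on the height $\ep-\de-1$, using the blow-up/blow-down operation on sparse triangles recalled immediately above the statement, together with Lemma \ref{sparse} as the structural tool restricting how the black dots of the two triangles can interact along a common line $l_\be$. The base cases of the induction match the explicit verifications already carried out for $e=5$ and $e=6$.
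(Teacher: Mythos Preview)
Your strategy is genuinely different from the paper's, and it has a real gap.

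The paper's proof is entirely self-contained: it never mentions $J$ or the known decomposition of $V(J)$. Instead it analyses $J'$ directly. First it checks that the locus $\{\si_\ep^{(11)}=0 \text{ for all }\ep\}$ is a component (the Artin component). Then, for each $\ep$ with $\si_\ep^{(11)}\neq0$, it forms the saturation $J'_\ep=J':(\si_\ep^{(11)})^\infty$ and shows, via the blow-up/blow-down operation on sparse triangles, that $J'_\ep$ is (after a linear change of variables and removing redundant generators) an ideal of the same type as $J'$ but for embedding dimension $e-1$. Induction on $e$ then gives the count, with the components of $J'_\ep$ corresponding to sparse triangles blown up at $\ep$.

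Your approach instead tries to transport the known component structure of $V(J)$ (established via Koll\'ar--Shepherd-Barron in \cite{js}) onto $V(J')$. For this you need both inclusions $V(J')\subseteq V(J)$ and $\bigcup_\triangle C(\triangle)\subseteq V(J')$. The second you propose to verify combinatorially; the first you derive from the containment $J\subseteq J'$, which you simply assert. But this containment is \emph{part of the Conjecture}, not an established fact. It would require showing that every $\al$-tree monomial occurring in a generator $\la_{\de,\ep}$ or $\rh_{\de,\ep}$ of $J$ is divisible by one of the Catalan-many monomial generators of $J'$; the paper neither proves this nor claims it is obvious (the Remark after the Conjecture only says the generators of $J'$ correspond to \emph{certain} terms of generators of $J$, not that \emph{all} terms are multiples of these). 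Without $J\subseteq J'$ your argument only yields $V(J)\subseteq V(J')$, which gives no upper bound on the number of components of $V(J')$.

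Even for the other direction, your ``combinatorial matching'' is only an outline: you would need to make precise the vanishing thresholds for $\si_\ep^{(ij)}$ on $C(\triangle)$ (this is not just $(\al_{\ep-1},\al_{\ep+1})$; one has to track the $L_\ep,R_\ep$-factorisation carefully), and then actually prove that for every pair $(\triangle,\triangle_{\de,\ep})$ some index $\be$ witnesses the vanishing. Your proposed induction on $\ep-\de$ is not the natural variable here --- the paper's induction is on $e$, reducing both the generators and the components simultaneously via blow-down, which is what makes the argument close.
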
 

\begin{proof}
If $\sig\ep11=0$ for all $2<\ep<e-1$, the equations are satisfied:
if a triangle $\triangle$ contains black dots, there has to be at least
one on the base line, at  $(\ep-1,\ep+1)$, so $\sig\ep11=0$ for that $\ep$;
an equation $\la(\triangle)$ for an empty triangle ends with 
a $\sig\ep11$ for some $\ep<e-1$, and $\rh(\triangle)$ starts with
a  $\sig\ep11$ for some $\ep>2$. So the Artin component is a component.

Suppose now that there exists an $\ep$ with $\sig\ep11\neq0$.
Let $J_\ep'$ be the saturation of $J'$ by $\sig\ep11$, i.e., 
$J_\ep'=\cup(J':(\sig\ep11)^i)$. It yields the
equation $l_\ep-r_\ep=0$, so $\sig\ep20=\sig\ep11=\sig\ep02$.
We conclude that $\sig{\ep-1}20=\sig{\ep-1}11=0$ and
$\sig{\ep+1}11=\sig{\ep+1}02=0$ (if $\ep=2$ or $\ep=e-1$ the statements
have to be modified somewhat). As 
$\sig{\ep-1}20-\sig{\ep-1}11=\sig{\ep-1}21(l_{\ep-1}-r_{\ep-1})$,
one has  $\sig{\ep-1}21(l_{\ep-1}-r_{\ep-1})\in J_\ep'$, and
likewise $\sig{\ep+1}12(l_{\ep+1}-r_{\ep+1})\in J_\ep'$.

Consider a monomial $\la(\triangle_{\be,\ga})$ or  
$\rh(\triangle_{\be,\ga})$, containing $\sig\ep11$ (or $\sig\ep20$
if $\be=\ep$, or $\sig\ep02$ if $\ep=\ga$). Then 
$\triangle_{\be,\ga}=\Bl_\ep(\triangle_{\be,\ga-1})$, and the monomial
in question is obtained from $\la(\triangle_{\be,\ga-1})$ or  
$\rh(\triangle_{\be,\ga-1})$ by leaving the $\sig\de ij$ unchanged
for $\de<\ep-1$, replacing $\sig{\ep-1}ij$ by $\sig{\ep-1}{i+1,}j$,
inserting $\sig\ep11$, replacing $\sig\ep ij$ by $\sig{\ep+1}i{,j+1}$
and  $\sig\de ij$ by  $\sig{\de+1}ij$ for $\de>\ep$. We claim that the
polynomials considered so far, together with the monomials, not
involving $\sig{\ep-1}ij$, $\sig{\ep}ij$ and $\sig{\ep+1}ij$ at all,
generate the ideal $J_\ep'$. It follows then that this ideal, 
up to renaming the coordinates as above, and up to some linear equations,
is an ideal of the same type as $J'$, but one embedding dimension lower.
By induction we conclude that $J_\ep'$ describes components,
parametrised by sparse coloured triangles, blown up at $\ep$. By 
varying $\ep$, with $\sig\ep11\neq0$, we obtain all components
(except the Artin component, which we already have).

It remains to prove the claim. The not yet considered generators of $J'$
come in two types, those containing $\sig\ep ij$ with $i+j>2$, and
those not containing a   $\sig\ep ij$ at all, but ending with
$\sig{\ep-1}ij$ or starting with $\sig{\ep+1}ij$. Regarding the first type,
we prove  that such a monomial is a multiple of one of the claimed
generators, by induction on the length of the monomial. For this we note
that the claim holds for the monomial if and only if it holds for the
monomial, obtained by blowing down the triangle at $\de$ with $\de\neq
\ep-1,\ep+1$. The base of the induction  is the case of monomials
containing $\sig{\ep-1}20$,  $\sig{\ep-1}11$, $\sig{\ep+1}11$ 
or $\sig{\ep+1}02$. As to the second type, we consider those
starting with  $\sig{\ep+1}ij$. If the term is of the form
$\la(\triangle{\ep+1,\ga})$, then it starts with $\sig{\ep+1}i0$.
One has $\sig{\ep+1}i0=\sig{\ep+1}{i-1,}1+\sig{\ep+1}i1(l_{\ep+1}-r_{\ep+1})$.
The term obtained by replacing  $\sig{\ep+1}i0$ by $\sig{\ep+1}i1$,
is one of our generators. We are left with monomials, starting
with $\sig{\ep+1}{i-i,}1$; such monomials also come from
$\rh(\triangle{\ep+1,\ga})$. For those the claim is again shown by
induction, using blowing down.
\end{proof}



\end{document}